\DeclareMathOperator{\tr}{tr}
\DeclareMathOperator{\ch}{char}
\DeclareMathOperator{\GL}{GL}
\DeclareMathOperator{\SL}{SL}
\DeclareMathOperator{\GA}{GA}
\DeclareMathOperator{\SA}{SA}
\DeclareMathOperator{\Conj}{Conj}
\DeclareMathOperator{\Ker}{Ker}
\newtheorem{tw}{{\sf Theorem}}[section]
\newtheorem{lem}[tw]{{\sf Lemma}}
\newtheorem{wn}[tw]{{\sf Corollary}}
\newtheorem{prop}[tw]{{\sf Proposition}}
\theoremstyle{definition}
\newtheorem{deff}[tw]{{\sf Definition}}
\newtheorem{ex}[tw]{{\sf Example}}
\newtheorem{uw}[tw]{{\sf Remark}}
\numberwithin{equation}{section}
\newcommand{\blank}{{\mspace{1mu}\cdot\mspace{1mu}}}
\title{The Banach--Tarski paradox for some subsets of finite-dimensional normed spaces over non-Archimedean valued fields}
\author{Kamil Orzechowski\\
University of Rzesz\'ow\\ 
The Doctoral School\\
Rejtana 16c\\
35-959 Rzesz\'ow, Poland\\
}
\begin{document}

\maketitle

\renewcommand{\thefootnote}{}

\footnote{2020 \emph{Mathematics Subject Classification}: Primary 47S10; Secondary 46S10, 12J25, 26E30, 20E05, 20H20, 46B04, 05A18, 03E25.}

\footnote{\emph{Key words and phrases}: Banach--Tarski paradox, paradoxical decomposition, non-Archimedean valued field, non-Archimedean normed space, free group.}

\begin{abstract}
    We show some results related to the classical Banach--Tarski paradox in the setting of finite-dimensional normed spaces over a non-Archimedean valued field $K$. For instance, all balls and spheres in $K^n$, and the whole space $K^n$ (for $n\ge 2$) are paradoxical with respect to certain groups of isometries of $K^n$.
    If $K$ is locally compact (e.g., $K$ is the field $\mathbb{Q}_p$ of $p$-adic numbers for any prime number $p$), any two bounded subsets of $K^n$ with nonempty interiors are equidecomposable (and paradoxical) with respect to a certain group of isometries of $K^n$ (for $n\ge 2$).
\end{abstract}

\section{Introduction}

In 1924, Banach and Tarski (inspired by a result of Hausdorff \cite[p. 469]{Haus}) proved one of the most shocking theorems in mathematics, called the {\em Banach--Tarski paradox} \cite[Theorem 24]{BT}: 

{\it Any two bounded subsets $A$ and $B$ of $\mathbb{R}^3$ with nonempty interiors can be partitioned into finite number of pieces $A_1, \ldots, A_n$ and $B_1, \ldots, B_n$, respectively, in such a way that for some isometries $\tau_1, \ldots, \tau_n$ of $\mathbb{R}^3$ we have $\tau_k(A_k)=B_k$ for any $1\leq k \leq n.$} 

In modern terms, we can then say that $A$ and $B$ are {\em $G$-equidecomposable}, where $G$ is the group of all isometries of $\mathbb{R}^3$.
It follows that any ball in $\mathbb{R}^3$ can be partitioned into finite number of pieces in such a way that the pieces can be transformed using isometries of $\mathbb{R}^3$ to obtain two balls identical with the initial one \cite[Lemma 21]{BT}, i.e., any ball in $\mathbb{R}^3$ is {\em $G$-paradoxical}. Those results are called paradoxes as they seem counterintuitive. Clearly, the pieces involved cannot all be Lebesgue-measurable. Note that the Banach--Tarski paradox is a theorem in ZFC, but not in ZF \cite[Corollary 15.3]{W}.

 There have been many refinements, generalizations and inspired results since the work of Banach and Tarski was published. They include minimizing the number of pieces needed for a paradoxical decomposition (five is optimal for a ball in $\mathbb{R}^3$), generalization for balls and spheres in $\mathbb{R}^n$, $n\ge 3$, as well as similar results in hyperbolic spaces. The monograph \cite{W} is an excellent source covering most of these topics.

In this paper, we prove an analog of the Banach--Tarski paradox in the setting of finite-dimensional normed spaces $K^n$ over a non-Archimedean valued field $(K,\lvert\blank\rvert)$ instead of $(\mathbb{R}, \lvert\blank\rvert)$, e.g., $(K,\lvert\blank\rvert)$ may be the field $(\mathbb{Q}_p, \lvert\blank\rvert_p)$ of $p$-adic numbers or the field $(\mathbb{C}_p, \lvert\blank\rvert_p)$ of $p$-adic complex numbers for any prime number $p$. We are especially interested in paradoxical decompositions of balls and spheres in $K^n$, the whole space $K^n$, and the set $K^n\setminus \{\mathbf{0}\}$, $n\ge 2$.

The paper is organized as follows. Section 2 provides basic definitions of the theory of non-Archimedean valued fields and normed spaces over them. A characterization of linear and affine isometries of $K^n$ is given and some notation is introduced. In Section 3, for a group $G$ acting on a set $X$ we recall the notion of a {\em G-paradoxical} set $E\subseteq X$, and some basic results; in particular: If a group $G$ contains a non-Abelian free subgroup and acts without nontrivial fixed points on a set $X$, then $X$ is $G$-paradoxical using four pieces. We state and prove some auxiliary results, especially Theorem \ref{zbC} and Lemma \ref{lift}. Finally, we recall the relationship between paradoxical decompositions and amenability of groups.

Section 4 is focused on finding non-Abelian free subgroups in special linear groups $\SL(n,D)$ and special affine groups $\SA(n,D)$  over the {\em ring of integers} $D$ of $K$ (i.e., $D=\{\alpha \in K: \lvert \alpha \rvert\leq 1\}$). Subsection 4.1 deals with the case of $K$ of characteristic zero; its main result is Theorem \ref{Sato-like}. In Subsection 4.2, we investigate the case when $K$ is a transcendental extension of its prime field. We introduce polynomials describing the trace of matrices in $\SL(2,K)$. We prove Theorem \ref{degree_of_psi}, which provides a new tool for finding non-Abelian free subgroups of $\SL(2,D)$ acting without nontrivial fixed points on $K^2\setminus \{\mathbf{0}\}, D^2\setminus \{\mathbf{0}\}$, and any sphere $S[\mathbf{0},r]$, $r>0$, (Theorem \ref{pos_char}) and non-Abelian free subgroups of $\SA(n,D)$ acting without nontrivial fixed points on balls and spheres in $K^n$ (Theorem \ref{affine_general}).

Section 5 contains our main results. Assume that $(K, \lvert\blank\rvert)$ is a non-Archimedean nontrivially valued field and $n\geq 2$. We prove that all balls and spheres in $K^n$ not containing $\mathbf{0}$ are paradoxical with respect to certain groups of linear isometries of $K^n$ using four pieces for balls, and $5-(-1)^n$ pieces for spheres (Theorems \ref{balls_without_0} and \ref{spheres}), $K^n\setminus \{\mathbf{0}\}$ and $D^n\setminus \{\mathbf{0}\}$ (Corollary \ref{K^n_minus_0}). We also prove that all balls and spheres containing $\mathbf{0}$ are paradoxical using four pieces with respect to certain groups of affine isometries of $K^n$ (Theorem \ref{sets_with_0}). Next, we prove a strong result analogical to the classical Banach--Tarski paradox:

{\it If $(K,\lvert\blank\rvert)$ is a non-Archimedean discretely valued field with finite residue field (in particular, if $(K,\lvert\blank\rvert)$ is a non-Archimedean locally compact field) and $n\geq 2$, then any two bounded subsets of $K^n$ with nonempty interiors are $G$-equidecomposable (and $G$-paradoxical), where $G$ is the group of special affine isometries of $K^n$} (see Theorem \ref{strong_paradox_discrete_finite_residue}).

Finally, we prove that the whole space $K^n$ is paradoxical using five pieces with respect to a certain group of affine isometries (Theorem \ref{whole_space}).

\section{Non-Archimedean valued fields and normed spaces over them}

\subsection{Preliminaries}

Throughout this paper, by a {\em field} we mean a commutative division ring.

A {\em valuation} on a field $K$ is any function $\lvert\blank\rvert\colon K\to[0,\infty)$ satisfying for all $x,y\in K$ the following conditions:

(1) $\lvert x \rvert=0 \Leftrightarrow x=0$, (2) $\lvert xy \rvert=\lvert x \rvert\lvert y \rvert$, (3) $\lvert x+y \rvert\leq \lvert x \rvert+\lvert y \rvert$.

The pair $(K,\lvert\blank\rvert)$ is then called a {\em valued field}.
If we replace the condition (3) with a stronger one (3')
$\lvert x+y \rvert\leq \max\{\lvert x \rvert,\lvert y \rvert\},$ the valuation $\lvert\blank\rvert$ will be called {\em non-Archimedean}.

If $(X,d)$ is a metric space, $x_0\in X$ and $r>0$, we put\\
$B[x_0,r]:=\{x\in X: d(x,x_0)\leq r\}$, $B(x_0,r):=\{x\in X: d(x,x_0)<r\}$ and $S[x_0,r]:=\{x\in X: d(x,x_0)= r\}.$

A metric $d$ on a set $X$ is called an {\em ultrametric} if it satisfies the {\em strong triangle inequality}:
$d(x,z) \leq \max\{d(x,y), d(y,z)\}$ for all $x,y,z \in X$. The pair $(X,d)$ is then called an {\em ultrametric space}. It is noteworthy that, in an ultrametric space, any point belonging to a ball (open or closed) of a given radius may play the role of its center.

Any ultrametric space $(X,d)$ has the {\em isosceles property}: If $x,y,z \in X$ and $d(x,y)\neq d(y,z)$, then
    $d(x,z) = \max\{d(x,y), d(y,z)\}$.

Each valuation $\lvert\blank\rvert$ on a field $K$ induces a metric $d\colon K\times K \to [0, +\infty)$, $d(x,y):=\lvert x-y \rvert$. If $\lvert\blank\rvert$ is non-Archimedean, $d$ is an ultrametric. If $d_1$, $d_2$ are metrics on $K$ induced by valuations $\lvert\blank\rvert_1$, $\lvert\blank\rvert_2$ respectively, they induce the same topology on $K$ if and only if $\lvert\blank\rvert_1$ and $\lvert\blank\rvert_2$ are equivalent, i.e., $\lvert\blank\rvert_2=(\lvert\blank\rvert_1)^c$ for some $c>0$ \cite[Theorem 1.2.2]{PG}.

Let $(K,\lvert\blank\rvert)$ be a fixed non-Archimedean valued field. We introduce the following notation:
$D= B[0,1], \mathfrak{m}= B(0,1)$ and $D^* = S[0,1].$
The subset $D$ is a subring of $K$ called the {\em ring of integers} of $K$, $\mathfrak{m}$ is its unique maximal ideal and $D^*$ is the group of units (i.e., invertible elements) of $D$ (cf. \cite[Lemma 1.2]{Schneider}). The quotient ring $k:=D/\mathfrak{m}$ is called the {\em residue field} of $K$.

We denote by $K^{*}$ the multiplicative group of $K$. The set $\lvert K^{*} \rvert:=\{\lvert x \rvert\colon x\in K^{*}\}$ is a subgroup of the multiplicative group $\mathbb{R}_{+}$.
If $\lvert K^{*} \rvert=\{1\}$, the valuation $\lvert\blank\rvert$ is called {\em trivial}. If there exists $t:=\max \lvert K^{*} \rvert\cap(0,1)$, then $\lvert K^{*} \rvert=\{t^n \colon n\in \mathbb{Z}\}$ and the valuation is called {\em discrete}. In the remaining case $\lvert K^{*} \rvert$ is a dense subset of $[0,\infty)$ and the valuation is called {\em dense}.

Assume that $K$ is equipped with a discrete valuation. Fix an arbitrary $\pi\in K$ satisfying $\lvert \pi \rvert=t:=\max \lvert K^{*} \rvert\cap(0,1)$. Such an element $\pi$ will be called a {\em uniformizer}. Then $\lvert a \rvert\leq t^{n}$ holds if and only if $a\in \pi^n D$. Therefore we have
$B[0,t^{n}]=B(0,t^{n-1})=\pi^n D$ for all $n\in\mathbb{Z}$. Each nonzero ideal in $D$ is principal and of the form $\pi^n D$ for some $n\geq 0$, in particular $\mathfrak{m}=\pi D$.

If $(K,\lvert\blank\rvert)$ is a complete non-Archimedean valued field and $L$ is an algebraic extension of $K$, then there is a unique valuation on $L$ that extends $\lvert\blank\rvert$ {\cite[Theorem 14.2]{UC}}. It follows that if $(K,\lvert\blank\rvert)$ is a trivially valued field (e.g., a finite one), the only possible valuation on an algebraic extension $L$ of $K$ that extends $\lvert\blank\rvert$ is trivial.

A non-Archimedean nontrivially valued field $(K, \lvert\blank\rvert)$ is locally compact if and only if it is complete, the valuation $\lvert\blank\rvert$ is discrete, and the residue field $k=D/\mathfrak{m}$ is finite {\cite [Corollary 12.2]{UC}}. A locally compact nontrivially valued field $(K,\lvert\blank\rvert)$ is called a {\em local field}.

Every nontrivial non-Archimedean valuation on $\mathbb{Q}$ is equivalent to the {\em $p$-adic valuation $\lvert\blank\rvert_p$}, for a prime number $p$, defined as follows
 \begin{equation}\label{p_adic_val}
    \lvert 0 \rvert_p:= 0, \; \left\lvert p^k \frac{m}{n}\right \rvert_p:=p^{-k} \quad \text{for } k,m,n\in\mathbb{Z}, \quad p\nmid m, p\nmid n.
\end{equation}
The completion of $(\mathbb{Q}, \lvert\blank\rvert_p)$ is called the {\em field of $p$-adic numbers} \cite[Definition 1.2.7]{PG} and denoted by $(\mathbb{Q}_p, \lvert\blank\rvert_p)$. It is a standard example of a non-Archimedean local field.
The valued field obtained by uniquely extending the valuation $\lvert\blank\rvert_p$ from $\mathbb{Q}_p$ to its algebraic closure and then taking the metric completion is called the {\em field of p-adic complex numbers} and denoted by $(\mathbb{C}_p, \lvert\blank\rvert_p)$ \cite[Example 1.2.11]{PG}. This field is algebraically closed (unlike $\mathbb{Q}_p$) but not locally compact \cite[Corollary 17.2]{UC}. Remarkably, $\mathbb{C}_p$ and $\mathbb{C}$ are isomorphic as fields \cite[p. 83]{vR}.

\subsection{Groups of linear and affine isometries of normed spaces $(K^n, \lVert \blank \rVert )$}

Let $R$ be a commutative ring with the identity element $1$. We write $R^{*}$ for the group of units (i.e., invertible elements) of $R$. We denote by $M_n(R)$ the ring of square matrices of order $n$ over $R$, and by $I_n$ the identity matrix of order $n$.
The group $\GL(n,R)=\left(M_{n}(R)\right)^{*}$ of invertible elements of $M_{n}(R)$ is called the ($n$-th) {\em general linear group} over $R$.

It is well known that $A\in\GL(n,R)$ if and only if the determinant of $A$ is invertible in $R$, i.e.,
\[\GL(n,R)=\{A\in M_{n}(R) \colon \det(A)\in R^* \}.\]
The kernel of the homomorphism $\det \colon \GL(n,R)\to R^*, A \mapsto \det (A),$ is called the ($n$-th) {\em special linear group} over $R$ and is denoted by $\SL(n,R)$, i.e.,
\[\SL(n,R)=\{A\in \GL(n,R) \colon \det(A)=1\}=\{A\in M_{n}(R) \colon \det(A)=1\}.\]

We adopt the convention identifying an $R$-linear mapping $U\colon R^n \to R^n$ with its matrix in the canonical basis $\{e_1,\dots,e_n\}$ of the free $R$-module $R^n$. We will treat vectors in $R^n$ as columns on which matrices operate from the left.

Recall also that an {\em affine mapping} $F\colon R^n \to R^n$ is a mapping of the form
\begin{equation}\label{affine}
   F(x)=L(x)+\tau, \quad x\in R^n,
\end{equation}
where $L\colon R^n \to R^n$ is linear and $\tau \in R^n$ is a fixed vector; $L$ and $\tau$ are called the {\em linear} and {\em translational} part of $F$, respectively. Notice that $F$ is invertible if and only if so is $L$. The group of all invertible affine transformations of $R^n$ is called the ($n$-th) {\em general affine group} over $R$ and denoted by $\GA(n, R)$. The subgroup of $\GA(n, R)$ consisting of all transformations with the linear part belonging to $\SL(n, R)$ is called the ($n$-th) {\em special affine group} over $R$ and is denoted by $\SA(n, R)$.

If $1\le n<m$, we can treat $R^m$ as the product $R^m=R^n \times R^{m-n}$. We then define the {\em canonical embedding} $\iota_{n,m}\colon \GA(n,R)\to \GA(m,R)$ by the formula
\begin{equation}\label{canonical_embedding}
    \iota_{n,m}(F)(x,y):=(F(x),y) \quad \textrm{for } (x,y)\in R^n \times R^{m-n}, \,F\in \GA(n,R).
\end{equation}
This embedding is obviously an injective group homomorphism.

If $F$ has the form \eqref{affine}, then $\iota_{m,n}(F)$ has the linear part equal to $L\times I_{m-n}$ and the translational part equal to $(\tau,0_{m-n})$.

 Let $X$ be a linear space over a non-Archimedean valued field $(K,\lvert\blank\rvert)$. A function $\lVert \blank \rVert \colon X\to [0, \infty)$ is called a {\em non-Archimedean norm} (briefly: norm) on $X$ if for all $x,y \in X$ and $\alpha\in K$ we have the following conditions:

    (1) $ \lVert x \rVert =0 \Leftrightarrow x=0$, (2) $\lVert \alpha x \rVert = \lvert \alpha \rvert \lVert x \rVert $, (3) $\lVert x+y \rVert  \leq \max\{\lVert x \rVert ,\lVert y \rVert \}$.

 Then the pair $(X,\lVert \blank \rVert )$ is called a {\em non-Archimedean normed space} over the non-Archimedean valued field $(K,\lvert\blank\rvert)$.
 The norm $\lVert \blank \rVert $ induces an ultrametric $d$ on $X$, namely $d(x,y):=\lVert x-y \rVert $.

For any $n\in \mathbb N$, the function 
\[\lVert \blank \rVert \colon K^n \to [0, \infty),\quad 
\lVert x \rVert :=\max \{\lvert x_1 \rvert, \ldots, \lvert x_n \rvert\},\]
is a non-Archimedean norm on $K^n$; it is called the {\em standard} norm. Further in the paper, we will denote by $\lVert\blank\rVert$ the standard norm on $K^n$, other norms will not be considered.

\medskip

Let us state a useful lemma saying that if $(K,\lvert\blank\rvert)$ is a non-Archimedean discretely valued field with finite residue field, then, for any set $A$ contained in a ball $B$ in $(K^n, \lVert \blank \rVert)$ with $\operatorname{Int}(A)\ne \emptyset$, the ball $B$ can be covered by a finite number of translates of $A$. This result will be used in the proof of Theorem \ref{strong_paradox_discrete_finite_residue}.

\begin{lem}\label{discrete_finite_residue}
    Let $(K, \lvert\blank\rvert)$ be a non-Archimedean discretely valued field with finite residue field $k$.
    If $r>0$, $n\in \mathbb{N}$, $A\subseteq B[\mathbf{0},r]\subseteq K^n$ and $A$ has nonempty interior, then there exist $m\in \mathbb{N}$ and $a_1,\dots,a_m \in B[\mathbf{0},r]$ such that $B[\mathbf{0},r]\subseteq (a_1 + A) \cup \dots \cup (a_m + A)$.
\end{lem}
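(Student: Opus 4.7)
My plan is to exploit the fact that balls around $\mathbf{0}$ in $K^n$ are additive subgroups (by the ultrametric/norm property) and that, because the residue field $k$ is finite, a smaller such ball has finite index in a larger one.

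First I would use the nonempty interior of $A$: pick $x_0 \in A$ and $\varepsilon > 0$ with $B(x_0, \varepsilon) \subseteq A$. Discreteness of $|K^*| = \{t^n : n \in \mathbb{Z}\}$ lets me shrink this to a closed ball $B[x_0, t^N] \subseteq A$ for some integer $N$, which by the ultrametric translation property equals $x_0 + B[\mathbf{0}, t^N] = x_0 + \pi^N D^n$. Similarly, replacing $r$ by the largest element of $|K^*|$ that does not exceed it (which exists by discreteness), I may assume $r = t^M$, so that $B[\mathbf{0}, r] = \pi^M D^n$ is also a $D$-submodule of $K^n$.

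Two cases then appear. If $r \le t^N$, then $B[\mathbf{0}, r] \subseteq B[\mathbf{0}, t^N] = -x_0 + B[x_0, t^N] \subseteq -x_0 + A$, and $-x_0 \in B[\mathbf{0}, r]$ because $x_0 \in A \subseteq B[\mathbf{0}, r]$, so a single translate suffices. If $r > t^N$, i.e.\ $M < N$, then the quotient of additive groups $B[\mathbf{0}, r] / B[\mathbf{0}, t^N] \cong (D/\pi^{N-M} D)^n$ is finite of cardinality $|k|^{n(N-M)}$, precisely because $k$ is finite. Choosing coset representatives $b_1, \dots, b_m \in B[\mathbf{0}, r]$ partitions $B[\mathbf{0}, r]$ into the translates $b_i + B[\mathbf{0}, t^N]$, and setting $a_i := b_i - x_0 \in B[\mathbf{0}, r]$ (which lies in $B[\mathbf{0}, r]$ since this ball is a subgroup containing both $b_i$ and $x_0$) gives $b_i + B[\mathbf{0}, t^N] = a_i + B[x_0, t^N] \subseteq a_i + A$, as required.

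I do not expect any genuine obstacle: the argument is essentially a pigeonhole calculation on $[\pi^M D^n : \pi^N D^n]$. The only ingredients that really matter are the discreteness of the valuation (so that the open ball inside $A$ is, up to radius, a closed ball $\pi^N D^n$, and $B[\mathbf{0}, r]$ may be put in the same form) and the finiteness of $k$ (so that the index of the smaller ball in the larger one is finite).
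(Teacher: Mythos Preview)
Your proposal is correct and follows essentially the same route as the paper: reduce to $B[\mathbf{0},r]=\pi^M D^n$, find a small ball $x_0+\pi^N D^n\subseteq A$, and use finiteness of the residue field to get $[\pi^M D^n:\pi^N D^n]=\lvert k\rvert^{n(N-M)}<\infty$, then translate coset representatives by $-x_0$. The paper's write-up simply absorbs your first case by choosing $j\ge i$ from the outset (shrinking the inner ball if necessary), but the argument is otherwise identical.
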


\begin{proof}
    Let us fix a uniformizer $\pi\in K$. We then have $B[\mathbf{0},r]=B[\mathbf{0},\lvert \pi \rvert^i]$ for some $i\in \mathbb{Z}$. Since $\operatorname{Int}(A)\ne \emptyset$, there exist $a\in A$ and $j\ge i$ such that $a+B[\mathbf{0},\lvert \pi \rvert^j]\subseteq A$. It will be sufficient to show that the subgroup $B[\mathbf{0},\lvert \pi \rvert^j]$ of $B[\mathbf{0},\lvert \pi \rvert^i]$ has finite index.

Let $q$ be the cardinality of $k$. Notice that $\pi^j D$ has index $q^{j-i}$ in $\pi^i D$. Indeed, if $T$ is a set of coset representatives in $D$ modulo $\pi D$, then the set of all sums $\sum_{s=0}^{j-i-1} \pi^{i+s} t_s$, where $t_0,\dots,t_{j-i-1}\in T,$ is a set of coset representatives in $\pi^i D$ modulo $\pi^j D$.
    Since any $x=(x_1,\dots,x_n)$ and $y=(y_1,\dots,y_n)$ in $B[\mathbf{0},\lvert \pi \rvert^i]$ are congruent modulo $B[\mathbf{0},\lvert \pi \rvert^j]$ if and only if $x_s-y_s \in \pi^j D$ for all $1\le s\le n$, it follows that the index of $B[\mathbf{0},\lvert \pi \rvert^j]$ in $B[\mathbf{0},\lvert \pi \rvert^i]$ equals $q^{n(j-i)}$.
\end{proof}

The following theorem gives a characterization of linear isometries of the normed space $(K^n,\lVert \blank \rVert )$ for a non-Archimedean valued field $(K,\lvert\blank\rvert)$. A similar result with an additional assumption of completeness of the field $(K,\lvert\blank\rvert)$ was proved in \cite[Corollary 1.7]{vRNotes}. Our proof is simpler and direct.

\begin{tw}\label{isoweight}
    Let $(K,\lvert\blank\rvert)$ be a non-Archimedean valued field and $n\in \mathbb N$. $\\$ Let $U: K^n \to K^n$ be a linear map with the matrix $[u_{i,j}]$ (also denoted by $U$) in the canonical basis of $K^n$.

    Then the following conditions are equivalent:
\begin{enumerate}[nosep, label={\em(\arabic*)}]
\item $U$ is a linear isometry of the normed space $(K^n,\lVert \blank \rVert )$,
i.e., $\lVert Ux \rVert =\lVert x \rVert $ for any $x\in K^n$.
\item $\lvert \det(U) \rvert=1$ and $\max_{1\leq i \leq n}\lvert u_{i,j} \rvert=1$ for any $1\leq j\leq n$.
\item $\lvert \det(U) \rvert=1$ and $\lvert u_{i,j} \rvert\leq 1$ for all $1\leq i,j \leq n$.
\item $U$ is invertible and $\lvert u_{i,j} \rvert, \lvert v_{i,j} \rvert \leq 1$ for all $1\leq i,j \leq n,$
where $[v_{i,j}]$ is the matrix of the linear map $V=U^{-1}$.
\item $U\in \GL(n,D)$.
\end{enumerate}
\end{tw}

\begin{proof}
$(1)\Rightarrow (2)$. Let $1\leq j\leq n.$ Then $\max_{1\leq i \leq n}\lvert u_{i,j} \rvert=1$ since $\lVert Ue_j \rVert =\lVert e_j \rVert .$ Hence,
\[\lvert \det(U) \rvert=\left\lvert \sum_{\sigma \in S_n} \mathrm{sgn} (\sigma )\, u_{1,\sigma(1)}\dots u_{n,\sigma (n)}\right \rvert \leq \max_{\sigma \in S_n} \lvert u_{1,\sigma (1)} \rvert\dots \lvert u_{n,\sigma (n)} \rvert \leq 1.\]
Clearly, $U$ is invertible and $U^{-1}$ is a linear isometry, so $\lvert \det(U^{-1}) \rvert\leq 1.$ Thus $1\geq \lvert \det (U) \rvert=\lvert \det(U^{-1}) \rvert^{-1}\geq 1,$ so $\lvert \det (U) \rvert=1.$

$(2)\Rightarrow (3)$ is obvious.

$(3)\Rightarrow (4)$. $U$ is invertible, since $\det (U)\neq 0$.  By Cramer's formula we have
$\lvert v_{i,j} \rvert=\left\lvert \frac{1}{\det(U)}(-1)^{i+j} \det(U_{j,i})\right \rvert=\lvert \det(U_{j,i}) \rvert$, where $U_{j,i}$ is the matrix obtained from $U$ by deleting the $j$-th row and the $i$-th column. Estimating $\lvert \det(U_{j,i}) \rvert$ as above, we get $\lvert \det(U_{j,i}) \rvert\leq 1,$ so $\lvert v_{i,j} \rvert\leq 1$ for all $1\leq i,j \leq n.$

$(4)\Rightarrow (1)$. For $1\leq j \leq n$ we have 
\[\lVert Ue_j \rVert =\left\lVert \sum_{i=1}^n u_{i,j}e_i \right\rVert =\max_{1\leq i \leq n} \lvert u_{i,j} \rvert\leq 1,\]
and for any $x=(x_1, \ldots, x_n) \in K^n$ we have 
\[\lVert Ux \rVert =\left\lVert \sum_{j=1}^n x_j Ue_j \right\rVert \leq \max_{1\leq j \leq n} \lvert x_j \rvert\lVert Ue_j \rVert \leq \max_{1\leq j \leq n} \lvert x_j \rvert= \lVert x \rVert .\] Similarly, we get $\lVert Ve_j \rVert \leq 1$ for $1\leq j \leq n$,  and  $\lVert Vx \rVert \leq \lVert x\lvert $ for any $x\in K^n.$  Hence, we get $ \rVert x\lVert = \rVert VUx\lVert \leq  \rVert Ux\lVert \leq  \rVert x\lVert ,$ so $ \rVert Ux\lVert = \rVert x\ \rvert$ for any $x\in K^n.$

$(4)\Leftrightarrow (5)$ is obvious.
\end{proof}

\begin{uw}\label{glsl}
     Thus, we can treat $\GL(n,D)$ as the group of all linear isometries of the normed space $(K^n,\lVert \blank \rVert )$. The isometries represented by elements of its subgroup $\SL(n,D)$ will be called {\em special linear isometries}.
\end{uw}

There is an analogy with the classical case of linear isometries of the Euclidean space $\mathbb{R}^3$. All such isometries form the orthogonal group
\[\mathrm{O}(3):=\{U\in M_3(\mathbb{R}) \colon U^{T}U=I_3\},\]
whose elements necessarily satisfy $\lvert \det{U} \rvert=1.$
The special orthogonal group
\[\mathrm{SO}(3):=\{U\in \mathrm{O}(3)\colon \det(U)=1\}\]
corresponds to the group of orientation-preserving linear isometries of $\mathbb{R}^3$, which are precisely rotations around an axis containing the origin.

An affine mapping $F\colon K^n \to K^n$ of the form \eqref{affine} is an isometry of the normed space $(K^n,\lVert \blank \rVert )$ if and only if so is its linear part $L$. By $\GA(n,D,K)$ we denote the group of all affine isometries of $(K^n,\lVert \blank \rVert )$, and by $\SA(n,D,K)$ we denote the intersection $\GA(n,D,K) \cap \SA(n, K)$, whose elements will be called {\em special affine isometries}.
Thus,
\[ \SA(n,D,K) = \{x\mapsto L(x) +\tau \colon L\in \SL(n, D), \tau\in K^n\}.\]
Notice also that $\SA(n, D)$ can be regarded as the subgroup of $\SA(n,D,K)$ consisting of all elements with translational parts $\tau \in D^n$.

We will also need two related families of subgroups of $\SL(n,D)$ and $\SA(n,D)$, respectively. Namely, for $\varepsilon\in (0,1]$ and a subring $R\subseteq D$ containing $1$, let us define
\begin{gather*}
    \SL(n,R,\varepsilon):=\{[a_{ij}]\in \SL(n,R)\colon \lvert a_{ii}-1 \rvert<\varepsilon, \lvert a_{ij} \rvert<\varepsilon \; \textrm{for } i\ne j, 1\le i,j\le n\},\\
    \SA(n,D,\varepsilon):=\{x\mapsto L(x) +\tau \colon   L\in \SL(n, D,\varepsilon), \tau\in D^n \}.
\end{gather*}

\section{Paradoxical sets with respect to group actions}

Most of the following definitions and facts are taken from the monograph \cite{W} and generalized in some cases. All group actions are assumed to be left. A set $X$ together with a specified action of a group $G$ on it will be called {\em a $G$-set}.
For the union of (a family of) sets, the symbol ``$\sqcup$'' (or ``$\bigsqcup$'') will indicate disjointness of the components.

    We say that a nonempty subset $E$ of a $G$-set $X$ is {\em $G$-paradoxical} \cite[Def. 1.1]{W}, if for some positive integers $m$, $n$ there exist pairwise disjoint subsets $A_1,\dots,A_m,B_1,\dots,B_n$ of $E$ and elements $g_1,\dots,g_m,h_1,\dots,h_n\in G$ such that
    \begin{equation}\label{slpar}
        E=\bigcup_{i=1}^{m} g_i(A_i) = \bigcup_{j=1}^{n} h_j(B_j).
    \end{equation}

    Then there exist positive integers $m$, $n$, subsets $A_1,\dots,A_m,B_1,\dots,B_n$ of $E$ and elements $g_1,\dots,g_m,h_1,\dots,h_n\in G$ \cite[Lemma A.12]{KateJ} such that
    \begin{equation}\label{rparwzor}
        E=A_1 \sqcup \dots \sqcup A_m \sqcup B_1 \sqcup \dots \sqcup B_n =\bigsqcup_{i=1}^{m} g_i(A_i) = \bigsqcup_{j=1}^{n} h_j(B_j).
    \end{equation}
    In such a case the decomposition \eqref{rparwzor} of $E$ is called {\em a $G$-paradoxical decomposition}.

    Additionally, if the subset $E$ of $X$ is $G$-invariant, we can assume that $g_1=h_1=1_{G}$ and the $g_i$'s, $h_j$'s, $m$, $n$ from \eqref{rparwzor} are the same as those from \eqref{slpar} (cf. \cite[Lemma A.12]{KateJ}).

    We say that subsets $A$ and $B$ of a $G$-set $X$ are {\em $G$-equidecomposable} \cite[Def. 3.4]{W} if for some $n\in\mathbb{N}$ there exist decompositions
    \begin{equation}\label{GEq}
        A=\bigsqcup_{i=1}^n A_i, \; B=\bigsqcup_{i=1}^n B_i,
    \end{equation}
    and elements $g_1,\dots,g_n\in G$ such that $g_i(A_i)=B_i$ for any $1\leq i\leq n.$ Then we write $A\sim_{G} B$ or $A\sim B$. If we want to emphasize that the decompositions \eqref{GEq} can be realized with $n$ pieces, we write $A\sim_{G, n} B$ or $A\sim_{n} B$.

     Clearly, a subset $E$ of a $G$-set $X$ is $G$-paradoxical if and only if $E$ is the union of disjoint subsets $A$ and $B$ such that $A\sim_m E \sim_n B$ for some $m,n\geq 1$. We then say that $E$ has a $G$-paradoxical decomposition {\em using $r=m+n$ pieces}. Obviously, if $E \subseteq X$ is $H$-paradoxical (using $r$ pieces) for some $H\le G$, then $E$ is $G$-paradoxical (using $r$ pieces).

    Let $X$ be a $G$-set and $H\le G$. It is not hard to check that if $E\subseteq X$ is $H$-paradoxical using $r$ pieces, then $g(E)$ is $H$-paradoxical using $r$ pieces for any $g\in G$ with $gHg^{-1}\le H$.

If not stated otherwise, in the case $X=G$ we mean the action of $G$ on itself by left multiplication. If a group $G$ is $G$-paradoxical, the least number $r$ for which there exists a paradoxical decomposition of $G$ using $r$ pieces is called the {\em Tarski number} of $G$ \cite{S}. In the same way we talk about the Tarski number of a $G$-paradoxical subset $E$ of a $G$-set $X$; clearly it is greater or equal to four.

Since all free groups of a given rank are isomorphic, we will sometimes abuse notation and write $F_n$ for any free group of rank $n$. By {\em the} free group $F_n$ we will denote the model group of reduced words over an alphabet consisting of $n$ symbols and their formal inverses.

The free group $F_2$ is the most important example of a paradoxical group. Its Tarski number is four \cite[Theorem 5.2]{W}. Indeed, let $\{\tau, \sigma\}$ be a basis of $F_2$. We will denote by $W(\xi)$ the set of all elements (reduced words) of $F_2$ which begin with the letter $\xi\in\{\tau,\tau^{-1},\sigma,\sigma^{-1}\}$, and $1_{F_2}$ will stand for the empty word. Let $A_1:= W(\tau), A_2:= W(\tau^{-1}),$
$B_1:= W(\sigma) \cup \{1_{F_2}\} \cup \{\sigma^{-n}\colon n\ge 1\}$ and $B_2:=  W(\sigma^{-1})\setminus \{\sigma^{-n}\colon n\ge 1\}$. It is easy to check that
    $F_2=A_1 \sqcup A_2 \sqcup B_1 \sqcup B_2=A_1 \sqcup \tau(A_2)=B_1 \sqcup \sigma(B_2);$
    hence, $A\sim_2 F_2 \sim_2 B,$ where $A:= A_1 \sqcup A_2$ and $B:= B_1 \sqcup B_2$.

If $G$ acts on $X$ and $x\in X$, the subgroup $\mathrm{Stab}_G(x):=\{g\in G\colon g(x)=x\}$ of $G$ is called the {\em stabilizer} of $x$. When $\mathrm{Stab}_G(x)=\{1_{G}\}$ for all $x\in X$, then we say that $G$ acts on $X$ {\em without nontrivial fixed points}. If $\mathrm{Stab}_G(x)$ is an Abelian group for every $x\in X$, we say that $G$ acts on $X$ {\em locally commutatively}.

    If a group $G$ is $G$-paradoxical using $r$ pieces and $G$ acts on $X$ without nontrivial fixed points, then the $G$-set $X$ is $G$-paradoxical using $r$ pieces \cite[Proposition 1.10]{W}. Hence, if a group $G$ acts on $X$ without nontrivial fixed points and $G$ contains a free subgroup $F_2$, then the set $X$ is $G$-paradoxical using $4$ pieces.

 Under the axiom of choice, we have the following characterization of $G$-paradoxicality using $4$ pieces.
A $G$-set $X$ is $G$-paradoxical using $4$ pieces if and only if $G$ contains a free subgroup $F_2$ whose action on $X$ is locally commutative. More precisely, $X$ has a decomposition
    \begin{equation}\label{characterization_four_pieces}
        X=A_1 \sqcup A_2 \sqcup A_3 \sqcup A_4 = A_1 \sqcup \sigma(A_2) = A_3 \sqcup \tau(A_4)
    \end{equation}
    for $\sigma,\tau \in G$ if and only if $\{\sigma,\tau\}$ is a basis of a free subgroup $F_2\le G$ whose action on $X$ is locally commutative \cite[Theorems 5.5 and 5.8]{W}.

\begin{tw}\label{slabyrozkl}
    Let $X$ be a $G$-set, $A,B,C,Z\subseteq X$ be pairwise disjoint and $E=A \sqcup B \sqcup C$. Let $H$ be a subgroup of $G$. If $A\sim_{H,m} E \sim_{H,n} B$ and $C\sim_{G,l} Z$, the set $E \sqcup Z$ is $G$-paradoxical using $m+n+l+1$ pieces.

    If, additionally, $A$, $B$ or $E$ is $H$-invariant, then $E\sqcup Z$ is $G$-paradoxical using $m+n+l$ pieces.
\end{tw}

\begin{proof}
    From the assumption we have
    \begin{equation}\label{4zbiory}
        E\sqcup Z = (A\sqcup Z) \sqcup (B\sqcup C).
    \end{equation}
    From the relations $B\sim_{H,n} E$ and $C\sim_{G,l} Z$, having in mind the disjointness of $B$ with $C$ and $E$ with $Z$, we conclude that $(B\sqcup C)\sim_{G,n+l} (E\sqcup Z)$.
    Similarly we justify $(A\sqcup Z)\sim_{H,m+1} (E\sqcup Z)$, which finishes the proof of the first part of our theorem.

    If $A$ or $E$ is $H$-invariant, the relation $A\sim_{H,m} E$ can be realized by decompositions
    \[A=A_1 \sqcup \left( \bigsqcup_{i=2}^{m} A_i \right), \quad E= A_1 \sqcup \left( \bigsqcup_{i=2}^{m} \alpha_i (A_i)\right),\]
     for some $\alpha_2, \ldots, \alpha_m \in H$. That means, $A_1$ and $Z$ can be joined together into $A_1 \sqcup Z$ so that we obtain the relation $(A\sqcup Z)\sim_{H,m} (E\sqcup Z)$ and diminish by one the number of pieces sufficient to a $G$-paradoxical decomposition of $E\sqcup Z$.
    If $B$ is $H$-invariant, one needs to repeat the reasoning, interchanging the roles of $A$ and $B$ in \eqref{4zbiory}.
\end{proof}

An elementary example of the situation $A\sim_G E \sim_G B$, $E=A\sqcup B \sqcup C$ with a nonempty $C$ is the action of the free group $F_2$ over the alphabet $\{\sigma,\tau\,\sigma^{-1},\tau^{-1}\}$ on itself. Let $A_1=W(\sigma)$, $A_2=W(\sigma^{-1})$, $B_1=W(\tau)$, $B_2=W(\tau^{-1})$, $C=\{1_{F_2}\}$ and $A=A_1 \sqcup A_2$, $B=B_1\sqcup B_2$, where $W(\xi)$ denotes the set of all elements of $F_2$ which begin with a letter $\xi \in \{\sigma,\tau\,\sigma^{-1},\tau^{-1}\}$. It is easy to check that
\[F_2=A_1 \sqcup \sigma(A_2)=B_1 \sqcup \tau(B_2),\]
so $A\sim_2 F_2 \sim_2 B$. We will make us of this in a more general situation of a locally commutative action of $F_2$ on $X$.

\begin{tw}\label{zbC}
Let $X$ be a $G$-set. Assume that $G$ contains a free subgroup $F=F_2$ which acts locally commutatively on a subset $E$ of $X$. Let $C$ be a subset of $E$ such that  $\mathrm{Stab}_F(x)=\{1_F\}$ and $Fx\cap C=\{x\}$ for every $x\in C.$ Then $E$ has a decomposition $E=A_1\sqcup A_2\sqcup B_1\sqcup B_2\sqcup C$ such that $E=A_1 \sqcup \sigma (A_2)= B_1 \sqcup \tau (B_2),$ where $\{\sigma, \tau\}$ is a basis of $F$.

If $Z\subseteq X\setminus E$ and $Z\sim_{G,l}C,$ then $Z \sqcup E$ is $G$-paradoxical using $4+l$ pieces. If $\alpha (C)\subseteq X\setminus E$ for some $\alpha \in G$ and $Y:=\alpha (C) \sqcup E,$ then 
\[Y= (\alpha (C) \sqcup A_1) \sqcup A_2\sqcup B_1\sqcup B_2\sqcup C=(\alpha (C) \sqcup A_1) \sqcup \sigma (A_2)=B_1 \sqcup \tau (B_2) \sqcup \alpha (C),\]
so $\alpha (C) \sqcup E$ is $H$-paradoxical using $5$ pieces, where $H$ is the subgroup of $G$ generated by $\sigma, \tau$ and $\alpha$.
    \end{tw}

\begin{proof}
For any disjoint subsets $U,V$ of $F$ and $x\in C$ we have $Vx \cap Ux=\emptyset,$ since $\mathrm{Stab}_F(x)=\{1_F\}.$ Moreover, $Fx\cap Fy=\emptyset$ for any different $x,y\in C$, since $Fx\cap C=\{x\}.$ The group $F$ acts locally commutatively on $E$ and the set $E_1:=\bigsqcup_{x\in C} Fx$ is $F$-invariant, so $F=F_2$ acts locally commutatively on the set $E_2:= E\setminus E_1.$ Thus, $E_2$ has a decomposition \[E_2=A^2_1\sqcup A^2_2 \sqcup B^2_1 \sqcup B^2_2\] such that  \[E_2=A^2_1\sqcup \sigma (A^2_2) = B^2_1 \sqcup \tau (B^2_2). \]   The sets $A^1_1:=\bigsqcup_{x\in C} W(\sigma) x, A^1_2:=\bigsqcup_{x\in C} W(\sigma^{-1}) x, B^1_1:=\bigsqcup_{x\in C} W(\tau) x$ and $B^1_2:=\bigsqcup_{x\in C} W(\tau^{-1}) x$ are pairwise disjoint and \[E_1=A^1_1\sqcup A^1_2 \sqcup B^1_1 \sqcup B^1_2 \sqcup C= A^1_1\sqcup \sigma (A^1_2) = B^1_1 \sqcup \tau (B^1_2). \]
It follows that \[E=A_1\sqcup A_2 \sqcup B_1 \sqcup B_2 \sqcup C= A_1\sqcup \sigma (A_2) = B_1 \sqcup \tau (B_2),\] where $A_1=A^1_1\sqcup A^2_1, A_2=A^1_2\sqcup A^2_2, B_1=B^1_1\sqcup B^2_1$ and $B_2=B^1_2\sqcup B^2_2.$

Let  $Z\subseteq X\setminus E$ with $Z\sim_{G,l}C.$ Then $C$ has a decomposition $C=\bigsqcup_{i=1}^l C_i$ such that $Z=\bigsqcup_{i=1}^l \alpha_i (C_i)$ for some $\alpha_1, \ldots, \alpha_l \in G$. Thus,
\[Z\sqcup E=(Z\sqcup A_1)\sqcup A_2 \sqcup B_1 \sqcup B_2 \sqcup \bigsqcup_{i=1}^l C_i=(Z\sqcup A_1)\sqcup \sigma (A_2)=B_1 \sqcup \tau (B_2) \sqcup \bigsqcup_{i=1}^l \alpha_i (C_i),\]
so $Z\sqcup E$ is $G$-paradoxical using $4+l$ pieces.

Hence, for $Z:=\alpha (C)$ we get the last part of the theorem. \end{proof}

In particular, we have obtained a tool for proving paradoxicality of some sets using $5$ pieces. It is a general statement of the method used originally by Robinson \cite{Rob} (see also\cite[Theorem 5.7]{W}) for getting a paradoxical decomposition of a ball in $\mathbb{R}^3$ using $5$ pieces, based on an action of a free subgroup of the group of rotations $\mathrm{SO}(3)$.

Now we will prove  a simple but important lemma, which will be used several times in our paper. It will serve as a main tool to ``lift'' some paradoxical decompositions of subsets of $K^2$ to higher dimensions.

\begin{lem}\label{lift}
    Let $X$ be a $G$-set, $X'$ a $G'$-set and {$\iota\colon G \to G'$} a group homomorphism. Assume that $E\subseteq X$, $E'\subseteq X'$, $E'$ is {$\iota(G)$-invariant} and a function $f\colon E' \to E$ satisfies
    \begin{equation}\label{gof}
        g\circ f = f \circ (\iota(g)) \quad \textrm{for all } \; g\in G.
    \end{equation}
    If $A\subseteq E$ and $A\sim_G E$ using $m$ pieces, then $f^{-1}(A) \sim_{\iota(G)} E'$ using $m$ pieces.
    In particular, if $E$ is $G$-paradoxical using $r$ pieces, then $E'$ is $\iota(G)$-paradoxical using $r$ pieces.
\end{lem}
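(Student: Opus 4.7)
The plan is to unpack the equidecomposition $A \sim_{G,m} E$ into its constituent pieces and transport it pointwise through $f^{-1}$, using the intertwining identity \eqref{gof} to replace each transforming element $g_i \in G$ by its image $\iota(g_i) \in G'$. Once the piece-by-piece translation is in place, the ``in particular'' clause will follow by applying the first part separately to the two halves of a paradoxical decomposition.

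First I would fix a witness to $A \sim_{G,m} E$, namely pairwise disjoint $A_1,\dots,A_m \subseteq A$ with $A=\bigsqcup_{i=1}^m A_i$ and elements $g_1,\dots,g_m \in G$ such that $E=\bigsqcup_{i=1}^m g_i(A_i)$. The candidate decompositions for the lifted relation are then
\[
   f^{-1}(A) = \bigsqcup_{i=1}^m f^{-1}(A_i), \qquad E' = \bigsqcup_{i=1}^m \iota(g_i)\bigl(f^{-1}(A_i)\bigr),
\]
with $\iota(g_i)$ acting as the transporting elements. Disjointness on the left is immediate from disjointness of the $A_i$.

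The heart of the proof is the set-theoretic identity
\[
   \iota(g_i)\bigl(f^{-1}(A_i)\bigr) = f^{-1}\bigl(g_i(A_i)\bigr)
\]
for each $i$. To verify it I would chase a point $y \in E'$: since $E'$ is $\iota(G)$-invariant, $\iota(g_i)^{-1}(y) \in E'$, and \eqref{gof} applied to $g_i^{-1}$ gives $f\bigl(\iota(g_i)^{-1}(y)\bigr) = g_i^{-1}(f(y))$; thus membership of $y$ in either side reduces to the single condition $f(y) \in g_i(A_i)$. Granted this identity, pairwise disjointness of the $\iota(g_i)(f^{-1}(A_i))$ follows from that of the $g_i(A_i)$, and their union equals $f^{-1}\bigl(\bigsqcup_{i=1}^m g_i(A_i)\bigr) = f^{-1}(E) = E'$, the last equality holding simply because $f$ takes values in $E$. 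This establishes $f^{-1}(A) \sim_{\iota(G),m} E'$.

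For the paradoxical statement, suppose $E$ is $G$-paradoxical using $r=m+n$ pieces, witnessed by disjoint subsets $A,B \subseteq E$ with $A \sim_{G,m} E \sim_{G,n} B$. Applying the first part twice yields $f^{-1}(A) \sim_{\iota(G),m} E' \sim_{\iota(G),n} f^{-1}(B)$, and $f^{-1}(A)$ and $f^{-1}(B)$ are disjoint because $A$ and $B$ are. Hence $E'$ is $\iota(G)$-paradoxical using $m+n=r$ pieces. There is no real obstacle; the only bookkeeping point is that $\iota(g_i)$ must send $E'$ into $E'$, which is exactly the $\iota(G)$-invariance hypothesis, and that \eqref{gof} automatically passes to inverses, giving the relation on $E'$ needed in the preimage computation.
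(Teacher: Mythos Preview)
Your proof is correct and follows essentially the same approach as the paper: both transport the decomposition through $f^{-1}$ and verify the key identity $\iota(g_i)\bigl(f^{-1}(A_i)\bigr)=f^{-1}\bigl(g_i(A_i)\bigr)$ via the intertwining relation \eqref{gof}. Your argument is slightly more explicit about the role of $\iota(G)$-invariance and about deriving the paradoxical clause, but the substance is identical.
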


\begin{proof}
    From the assumption we have
    \[A=\bigsqcup_{i=1}^m A_i, \quad E=\bigsqcup_{i=1}^m g_i (A_i), \quad g_1,\dots,g_m \in G;\]
    therefore also
    \[f^{-1}(A)=\bigsqcup_{i=1}^m f^{-1}(A_i), \quad E'=\bigsqcup_{i=1}^m f^{-1}(g_i (A_i)), \quad g_1,\dots,g_m \in G.\]
    It follows from \eqref{gof} that, for any $i\in\{1,\dots,m\}$, we have
    \begin{align*}
        f^{-1} (g_i(A_i)) & = \{x\in E' \colon f(x)\in g_i (A_i)\}=\{x\in E' \colon (g_i^{-1} \circ f) (x) \in A_i \}                         \\
                          & =\{x\in E'\colon (f \circ \iota(g_i^{-1}) )(x)\in A_i \} = \{x\in E' \colon \iota(g_i^{-1}) (x) \in f^{-1}(A_i)\} \\
                          & = \iota(g_i) (f^{-1}(A_i)),
    \end{align*}
    so $E'=\bigsqcup\limits_{i=1}^m \iota(g_i) (f^{-1}(A_i))$, which shows that $f^{-1}(A) \sim_{\iota(G),m} E'$.
\end{proof}

If a group $G$ acts on $X$ and $E\subseteq X$ is $G$-paradoxical, then $E$ is $G$-equidecomposable with any subset $A$ of $E$ such that
    \begin{equation}\label{skonpokr}
        E\subseteq g_1 A \cup \dots \cup g_n A \quad \textrm{for some } n\in\mathbb{N}, \; g_1,\dots,g_n \in G.
    \end{equation}
Hence, any two such subsets $A', A''$ of $E$ are $G$-equidecomposable and both $G$-paradoxical \cite[Corollary 10.22]{W}.

\subsection{Amenability and nonexistence of paradoxical decompositions}

The concept of amenability (i.e., admitting some invariant measure by a group $G$) is closely connected to nonexistence of paradoxical decompositions of $G$-sets and their subsets.

    Let $X$ be a $G$-set. A finitely additive measure
    $\mu\colon \mathcal{P}(X)\to [0,\infty]$ is called {\em $G$-invariant} if it satisfies
    \begin{equation}\label{inv}
        \mu(g A)=\mu(A) \quad \textrm{for any } \; g \in G, \, A \subseteq X.
    \end{equation}
    If $X=G$, we say that such $\mu$ is {\em left-invariant}.

    A group $G$ is {\em amenable} if there exists a left-invariant, finitely additive measure $\mu\colon \mathcal{P}(G)\to [0,1]$ with $\mu(G)=1$.

If $\mu\colon \mathcal{P}(X)\to [0,\infty]$ is a $G$-invariant, finitely additive measure on a $G$-set $X$ and $E\subseteq X$ is $G$-paradoxical with $\mu(E)<\infty$, then $\mu(E)=2\mu(E)$ and so $\mu(E)=0$. Thus the existence of a $G$-invariant measure on $X$ excludes $G$-paradoxical decomposition of any $E\subseteq X$ with $0<\mu(E)<\infty$.

In fact, the converse also holds (if the axiom of choice is assumed). The following fact is known as Tarski's theorem (see \cite{Tar38} and a modern approach \cite[Corollary 11.2]{W}):
A subset $E$ of a $G$-set $X$ is $G$-paradoxical if and only if there is no $G$-invariant measure $\mu$ on $X$ with $\mu (E)=1$.

It follows that a group $G$ is amenable if and only if $G$ is not $G$-paradoxical \cite[Theorem A.13]{KateJ}.

If an amenable group $G$ acts on a set $X$, then $X$ is not $G$-paradoxical; in fact, any subset of $X$ containing a nonempty $G$-invariant subset is not $G$-paradoxical (see \cite[Theorem 12.3]{W} and its proof). 

Let us recall a few basic facts about amenable groups \cite[Theorem 12.4]{W}, \cite[Theorem 3.2]{KateJ}:
\begin{enumerate}[nosep, label=(\arabic*)]
    \item All finite and Abelian groups are amenable.
    \item A subgroup of an amenable group is amenable.
    \item If $H$ is a normal subgroup of a group $G$, then $G$ is amenable if and only $H$ and $G/H$ are both amenable.
    \item If $G$ is the direct union of a directed system $(G_{\alpha})_{\alpha\in I}$ of amenable groups, then $G$ is amenable.
    \item All solvable groups are amenable.
\end{enumerate}
It follows from (4) that any {\em locally finite} group (i.e., such that its every finite subset generates a finite subgroup) is amenable. 

If a group $G$ contains a free subgroup $F_2$ of rank two, then $G$ is not amenable, since $F_2$ is $F_2$-paradoxical, so nonamenable. We will see that for an important class of groups the converse also holds.

There is a famous work of Tits \cite{Tits} classifying linear groups, i.e., subgroups of the group $\GL(V)$ of all linear automorphisms of a linear space $V$ over a field $K$. Its main results can be reformulated in the context of amenability. The next statement is a conclusion from \cite[Theorem 1]{Wang} and \cite[Theorem 2]{Tits} (see also Theorem 12.6 in \cite{W} and the discussion following it).

\begin{prop}\label{Tits'}
    Let $K$ be a field and $n\in\mathbb{N}$. Let $G$ be a subgroup of $\GL(n,K)$. Then $G$ is amenable or has a free subgroup of rank two.
\end{prop}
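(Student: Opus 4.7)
The plan is to proceed by contraposition: assume $G$ contains no free subgroup of rank two and prove that $G$ must then be amenable. A first, essentially formal, step is to reduce to finitely generated subgroups. Writing $G$ as the direct union of its finitely generated subgroups under inclusion, property (4) says $G$ is amenable if and only if every finitely generated $H \leq G$ is amenable. The hypothesis that $G$ contains no $F_2$ passes immediately to every subgroup, so it suffices to show that every finitely generated $H \leq \GL(n,K)$ containing no $F_2$ is amenable.

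For such an $H$, I would invoke the Tits alternative --- this is the deep input, supplied by the cited theorems of Tits and, for arbitrary characteristic, of Wang: a finitely generated subgroup $H$ of $\GL(n,K)$ either contains a free group of rank two or is virtually solvable, i.e.\ has a solvable subgroup of finite index. By hypothesis the first option is ruled out, so $H$ contains a solvable subgroup $S$ of finite index.

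The remaining step is routine manipulation using the closure properties (1)--(5). I would replace $S$ by its normal core $N := \bigcap_{h \in H} hSh^{-1}$, which, being a finite intersection of conjugates of $S$, is a normal subgroup of $H$ of finite index; since $N \leq S$, property (2) makes $N$ solvable. Then (5) makes $N$ amenable, (1) makes the finite quotient $H/N$ amenable, and (3) applied to the extension $N \trianglelefteq H$ makes $H$ itself amenable. Since this holds for every finitely generated subgroup of $G$, one last application of (4) yields that $G$ is amenable, completing the contrapositive.

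The main obstacle is the Tits alternative itself, which is genuinely deep; everything else is cheap closure bookkeeping once it is accepted as a black box. The role of Wang's theorem, in comparison with Tits' original statement, is to secure the dichotomy for linear groups over fields of arbitrary characteristic, which is the only characteristic-sensitive point in the argument.
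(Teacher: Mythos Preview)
Your proof is correct, but it is organized differently from the paper's. The paper does \emph{not} reduce to finitely generated subgroups; instead it splits by characteristic and applies the structural results directly to $G$ itself: in characteristic zero it cites Wang's theorem to obtain a normal solvable subgroup of finite index in $G$, while in positive characteristic it cites Tits' Theorem~2 to obtain a normal solvable subgroup $N\trianglelefteq G$ with $G/N$ locally finite, and in each case amenability follows from properties (1), (3), (4), (5). Your route---reduce to finitely generated $H$ via property (4), invoke the finitely-generated Tits alternative to get virtual solvability, pass to the normal core, conclude via (1), (3), (5)---is equally valid and has the minor advantage of only needing the ``basic'' Tits alternative for finitely generated linear groups, at the cost of one extra direct-limit step. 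Note, incidentally, that your description of the respective roles of Tits and Wang is inverted relative to the paper's citations: the paper invokes Wang for characteristic zero and Tits for positive characteristic, not the other way around.
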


\begin{proof}
    Suppose that $G$ has no subgroup of rank two. If $\ch{K}=0$, then G has a normal solvable subgroup of finite index \cite[Theorem 1]{Wang}, so $G$ is amenable. If $\ch{K}>0$, then $G$ has a normal solvable subgroup $N$ such that $G/N$ is locally finite \cite[Theorem 2]{Tits}, so $G$ is amenable.
 \end{proof}

Let us present a nontrivial example of an amenable linear group.

\begin{ex}\label{alg_ext_of_finite}
    Let $K$ be a finite field and $L$ an algebraic extension of $K$. It is known that $L$ can be expressed as the union $L=\bigcup_{\alpha\in I} K_{\alpha}$ of all subfields $K\subseteq K_{\alpha}\subseteq L$ such that $K_{\alpha}$ is a finite extension of $K$ (in particular, also a finite field). The system $(K_{\alpha})_{\alpha\in I}$ is directed by inclusion. Therefore, for any $n\in \mathbb{N}$, there is a decomposition
    \[\GL(n,L)=\bigcup_{\alpha\in I} \GL(n,K_{\alpha})\]
    of $\GL(n,L)$ as the direct union of a directed system of finite groups, so
    $\GL(n,L)$ is amenable.
\end{ex}

The following simple lemma will be used in the proof of Theorem \ref{trivial_valuation}.

\begin{lem}\label{A(n,K)_amenable}
    Let $K$ be a field and $n\in \mathbb N$. If $\GL(n,K)$ is amenable, then so is $\GA(n,K)$.
\end{lem}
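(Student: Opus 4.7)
The plan is to exhibit $\GA(n,K)$ as an extension of $\GL(n,K)$ by the Abelian group of translations, and then apply fact (3) from the list of basic properties of amenable groups recalled just above the lemma.

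More concretely, I would first identify the subgroup $T:=\{x\mapsto x+\tau\colon \tau\in K^n\}$ of pure translations inside $\GA(n,K)$. The map $\tau\mapsto (x\mapsto x+\tau)$ is a group isomorphism from $(K^n,+)$ onto $T$, so $T$ is Abelian and hence amenable by fact (1). Next I would verify that $T$ is normal in $\GA(n,K)$: conjugating a translation $x\mapsto x+\tau$ by an affine map $F(x)=L(x)+\sigma$ yields
\[
(F\circ T_\tau \circ F^{-1})(x)=L(L^{-1}(x-\sigma)+\tau)+\sigma=x+L(\tau),
\]
which is again a translation, namely $T_{L(\tau)}\in T$.

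Then I would define the projection $p\colon \GA(n,K)\to \GL(n,K)$ sending each affine mapping to its linear part; this is a well-defined surjective group homomorphism (composition of affine maps multiplies the linear parts) with kernel exactly $T$. Hence $\GA(n,K)/T\cong \GL(n,K)$, which is amenable by hypothesis. Applying property (3) of amenable groups to the short exact sequence
\[
1\longrightarrow T\longrightarrow \GA(n,K)\longrightarrow \GL(n,K)\longrightarrow 1,
\]
we conclude that $\GA(n,K)$ is amenable.

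There is no serious obstacle here; the only thing to watch is verifying carefully that $p$ is a homomorphism with kernel $T$ and that $T$ is normal, both of which are routine computations with the composition rule for affine maps. The proof is essentially an application of the semidirect-product decomposition $\GA(n,K)\cong K^n\rtimes \GL(n,K)$ combined with the permanence property of amenability under group extensions.
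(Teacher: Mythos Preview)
Your proposal is correct and follows essentially the same approach as the paper: both arguments use the epimorphism from $\GA(n,K)$ onto $\GL(n,K)$ given by the linear-part map, identify its kernel with the Abelian group of translations $(K^n,+)$, and then invoke the extension property (3) of amenable groups. The paper's version is slightly terser (it does not write out the normality computation or the short exact sequence explicitly), but the content is the same.
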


\begin{proof}
    Let $\Phi \colon \GA(n,K) \to \GL(n,K)$ be the epimorphism sending an affine transformation to its linear part. Clearly,
    its kernel is the group of all translations $x\mapsto x+\tau$, $\tau\in K^n$. Hence, $\Ker{\Phi}\cong (K^n,+)$ is amenable as an Abelian group. Since $\GA(n,K)/\Ker{\Phi}\cong \GL(n,K)$ is amenable by the assumption, so is $\GA(n,K)$.
\end{proof}

\section{Some free subgroups of special linear groups $\SL(n,D)$ and special affine groups $\SA(n,D)$}

Let us begin with a simple observation from linear algebra.

\begin{lem}\label{trace}
    Let $K$ be a field and $M\in \SL(2,K)$. Then $\tr{M}=2$ if and only if the linear map $M\colon K^2 \to K^2$ has a nonzero fixed point.
\end{lem}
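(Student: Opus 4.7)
The plan is to reduce the statement to a standard characteristic-polynomial calculation. Since $M\in\SL(2,K)$ we have $\det(M)=1$, so its characteristic polynomial is
\[
p(\lambda)=\det(\lambda I_2 - M)=\lambda^2-\tr(M)\lambda+1.
\]
A nonzero fixed point of the linear map $M$ is precisely a nonzero solution of $(M-I_2)x=0$, i.e.\ a nonzero eigenvector with eigenvalue $1$. Such a vector exists if and only if $M-I_2$ is singular, which by the above is equivalent to $p(1)=0$.

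Now I would simply evaluate: $p(1)=1-\tr(M)+1=2-\tr(M)$. Hence $M$ admits a nonzero fixed point iff $p(1)=0$ iff $\tr(M)=2$, which is exactly the claimed equivalence.

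There is essentially no obstacle here; the proof is a two-line computation based on the formula for $\det(M-I_2)$ in a $2\times 2$ setting and the fact that $\det(M)=1$. One small thing worth noting is that no algebraic closure or extension of $K$ is required, since we need only detect whether $1$ itself (an element of the ground field) is a root of $p$, not factor $p$ over $K$.
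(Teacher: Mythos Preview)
Your proof is correct and is essentially identical to the paper's own argument: compute the characteristic polynomial $\lambda^2-(\tr M)\lambda+1$ using $\det M=1$, and observe that $M-I_2$ is singular iff $1$ is a root, i.e., iff $\tr M=2$. No changes are needed.
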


\begin{proof}
    The characteristic polynomial of $M$ has the form
\begin{equation}\label{charpol}
    f(\lambda)=\det(M-\lambda I_2)=\lambda^2 - (\tr M) \lambda + 1,
\end{equation}
    so $\det(M-I_2)=f(1)=0$ if and only if $\tr M=2$.
\end{proof}

The following lemma gives some conditions under which a group of isometries of $K^n$ acts on certain balls and spheres.

\begin{lem}\label{ultrametric_lemma}
    Let $(K,\lvert\blank\rvert)$ be a non-Archimedean valued field and $n\in \mathbb{N}$. Let $G$ be a group of isometries of the normed space $(K^n,\lVert \blank \rVert )$ and $\Sigma\subseteq G$ a generating set of $G$. Let $x\in K^n$ and $r>0$ be fixed. If $\lVert g(x)-x \rVert <r$ for all $g\in \Sigma$, then the balls $B[x,r]$, $B(x,r)$, and the sphere $S[x,r]$ are $G$-invariant.
\end{lem}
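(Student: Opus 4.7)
The plan is to reduce everything to the single estimate $\lVert g(x)-x\rVert<r$ for \emph{all} $g\in G$ (not just generators), and then read off invariance of each of the three sets from the strong triangle inequality and the isosceles property.

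First, I would observe that the hypothesis extends from $\Sigma$ to $\Sigma\cup\Sigma^{-1}$: if $g\in\Sigma$ is an isometry, then $\lVert g^{-1}(x)-x\rVert=\lVert g(g^{-1}(x))-g(x)\rVert=\lVert x-g(x)\rVert<r$. I would then show by induction on word length that every $g\in G$ satisfies $\lVert g(x)-x\rVert<r$. For the inductive step, writing $g=h_1 h'$ with $h_1\in\Sigma\cup\Sigma^{-1}$ and $h'$ a shorter word, the strong triangle inequality combined with the fact that $h_1$ is an isometry gives
\[\lVert g(x)-x\rVert=\lVert h_1(h'(x))-x\rVert\le\max\bigl\{\lVert h_1(h'(x))-h_1(x)\rVert,\lVert h_1(x)-x\rVert\bigr\}=\max\bigl\{\lVert h'(x)-x\rVert,\lVert h_1(x)-x\rVert\bigr\}<r,\]
using the inductive hypothesis on both terms.

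With this in hand, invariance of $B[x,r]$ and $B(x,r)$ is immediate. For $y$ with $\lVert y-x\rVert\le r$ (resp.\ $<r$) and any $g\in G$,
\[\lVert g(y)-x\rVert\le\max\bigl\{\lVert g(y)-g(x)\rVert,\lVert g(x)-x\rVert\bigr\}=\max\bigl\{\lVert y-x\rVert,\lVert g(x)-x\rVert\bigr\},\]
which is $\le r$ (resp.\ $<r$). Hence $g(B[x,r])\subseteq B[x,r]$ and $g(B(x,r))\subseteq B(x,r)$; applying the same argument to $g^{-1}$ yields the reverse inclusions, so both balls are $G$-invariant.

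For the sphere $S[x,r]$, the key is the isosceles property. If $\lVert y-x\rVert=r$, then $\lVert g(y)-g(x)\rVert=\lVert y-x\rVert=r$ while $\lVert g(x)-x\rVert<r$; these two distances are unequal, so by the isosceles property applied to the triple $(g(y),g(x),x)$,
\[\lVert g(y)-x\rVert=\max\bigl\{\lVert g(y)-g(x)\rVert,\lVert g(x)-x\rVert\bigr\}=r,\]
i.e.\ $g(y)\in S[x,r]$. Again the same reasoning for $g^{-1}$ gives $G$-invariance of $S[x,r]$.

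There is no real obstacle: the only non-routine point is the inductive extension of the bound from $\Sigma$ to $G$, and that hinges on the strict inequality $<r$ being preserved under the max in the strong triangle inequality. This is precisely why the hypothesis is $<r$ rather than $\le r$, and it is also what makes the isosceles argument work for the sphere.
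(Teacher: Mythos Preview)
Your proof is correct and rests on the same two ultrametric facts as the paper's: the strong triangle inequality for the balls and the isosceles property for the sphere. The paper organizes the argument slightly more economically by observing, for each $g\in\Sigma$, that $g(B[x,r])=B[g(x),r]=B[x,r]$ (and likewise for $B(x,r)$ and $S[x,r]$), so invariance under the generators---hence under all of $G$---follows without the explicit induction on word length.
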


\begin{proof}
    Let $g\in \Sigma$. Since $g$ is a surjective isometry, the images of $B[x,r]$, $B(x,r)$, $S[x,r]$ under $g$ are equal to $B[g(x),r]$, $B(g(x),r)$, $S[g(x),r]$, respectively.
    Since $\lVert g(x)-x \rVert <r$, we obtain $B[g(x),r]=B[x,r]$ and $B(g(x),r)=B(x,r)$ (by the strong triangle inequality), as well as $S[g(x),r]=S[x,r]$ (by the isosceles property). Thus, the sets under consideration are invariant with respect to all $g\in \Sigma$, hence also $G$-invariant.
\end{proof}

The proposition below provides a tool for finding explicit bases of some free groups of affine mappings acting on certain balls and spheres in $K^n$ without nontrivial fixed points.

\begin{prop}\label{free_in_affine}
    Let $K$ be a field and $n\in\mathbb{N}$.
    Assume that a family $\{A_i\}_{i\in S}$ is a basis of a free subgroup $F\le \GL(n,K)$ such that $\det(M-I_n)\ne 0$ for all $M\in F\setminus\{I_n\}$. Let $\hat{x} \in K^n$ and
    $u_i:=(I_n-A_i)\,\hat{x}$ for $i\in S$.
    Then, the family $\{T_i\}_{i\in S}$, where
    \begin{equation}\label{T_i}
        T_i\colon K^n\to K^n, \quad T_i(x):=A_i(x)+u_i,
    \end{equation}
    is a basis of a free subgroup $F'$ of $\GA(n,K)$ acting on $K^n\setminus \{\hat{x}\}$ without nontrivial fixed points.

    If, additionally, $K$ is equipped with a non-Archimedean valuation $\lvert\blank\rvert$, $x\in K^n\setminus \{\hat{x}\}$, $0<r<\lVert x-\hat{x} \rVert $ and $F$ is a subgroup of $\SL(n,D,\varepsilon)$ for some $0<\varepsilon\le \varepsilon_0$, where
    \begin{equation}\label{choice_of_epsilon}
        \varepsilon_0:=
    \begin{cases}
       \min\left(\frac{r}{\lVert x-\hat{x} \rVert }, \frac{1}{\lVert \hat{x} \rVert }\right) & \textrm{if } \hat{x}\ne \mathbf{0},\\
       \frac{r}{\lVert x \rVert } & \textrm{if } \hat{x}= \mathbf{0},
    \end{cases}
    \end{equation}
    then $F'\le \SA(n,D,\varepsilon)$
    and $F'$ acts on the balls $B[x,r]$, $B(x,r)$, and the sphere $S[x,r]$ without nontrivial fixed points.
\end{prop}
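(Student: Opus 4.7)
The plan is to exploit the conjugation identity $T_i = t_{\hat{x}}\circ A_i\circ t_{\hat{x}}^{-1}$, where $t_{\hat{x}}\colon K^n\to K^n$ is the translation $t_{\hat{x}}(x) := x+\hat{x}$. A direct calculation gives $t_{\hat{x}}(A_i(x-\hat{x})) = A_i(x) - A_i(\hat{x}) + \hat{x} = A_i(x) + (I_n - A_i)\hat{x} = T_i(x)$. Since conjugation by a fixed element is a group automorphism of $\GA(n,K)$, I would conclude immediately that $\{T_i\}_{i\in S}$ freely generates a subgroup $F'$ isomorphic to $F$. Each element of $F'$ has the form $T = t_{\hat{x}}\circ A \circ t_{\hat{x}}^{-1}$ for a unique $A\in F$, and $T(x) = x$ is equivalent to $A(x-\hat{x}) = x-\hat{x}$; if $A\ne I_n$, the hypothesis $\det(A-I_n)\ne 0$ then forces $x = \hat{x}$, proving the first assertion.

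For the non-Archimedean part I would first verify that $F'\le \SA(n,D,\varepsilon)$. Writing $T_i(x) = A_i(x)+u_i$, the linear parts $A_i$ lie in $\SL(n,D,\varepsilon)$ by assumption, and every entry of $I_n - A_i$ has absolute value strictly less than $\varepsilon$, so the standard non-Archimedean estimate gives $\|u_i\| < \varepsilon\|\hat{x}\|$. Combining this with $\varepsilon\le\varepsilon_0\le 1/\|\hat{x}\|$ when $\hat{x}\ne\mathbf{0}$ (and with the trivial case $u_i=\mathbf{0}$ when $\hat{x}=\mathbf{0}$) gives $u_i\in D^n$. The same estimate applied to an arbitrary word $A=A_{i_1}^{e_1}\cdots A_{i_k}^{e_k}\in F$ shows the translational part $(I_n-A)\hat{x}$ of the corresponding element of $F'$ also lies in $D^n$, establishing $F'\le \SA(n,D,\varepsilon)$.

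For the action on $B[x,r]$, $B(x,r)$, and $S[x,r]$, the conjugation identity yields $T_i(x)-x = (A_i - I_n)(x-\hat{x})$, hence $\|T_i(x)-x\| < \varepsilon\|x-\hat{x}\| \le \varepsilon_0\|x-\hat{x}\| \le r$ in both cases of \eqref{choice_of_epsilon}. Taking $\Sigma := \{T_i\}_{i\in S}$ as a generating set of $F'$, Lemma \ref{ultrametric_lemma} then makes the three sets $F'$-invariant. Since $r < \|x-\hat{x}\|$ forces $\hat{x}\notin B[x,r]$, these sets are contained in $K^n\setminus\{\hat{x}\}$, and the absence of nontrivial fixed points there follows from the first part.

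The step I would watch most carefully is the bookkeeping that matches the three required bounds (namely $u_i\in D^n$, $\|T_i(x)-x\|<r$, and $\hat{x}\notin B[x,r]$) against the piecewise definition \eqref{choice_of_epsilon} of $\varepsilon_0$, especially the case $\hat{x}=\mathbf{0}$, where $\|\hat{x}\|$ must be avoided in the denominator. Once the conjugation identity is in hand, both the freeness and the fixed-point statement are essentially immediate, and all remaining estimates are direct applications of the strong triangle inequality.
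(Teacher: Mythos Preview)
Your proof is correct and essentially matches the paper's, the one difference being that you package the first part via the conjugation identity $T_i = t_{\hat{x}}\circ A_i\circ t_{\hat{x}}^{-1}$ (yielding freeness and the fixed-point statement in one stroke), whereas the paper uses the linear-part homomorphism $\Phi\colon\GA(n,K)\to\GL(n,K)$ to deduce freeness and then separately argues that $\hat{x}$ is the unique fixed point of each nonidentity $T\in F'$. The second part---the estimates $\lVert u_i\rVert\le\varepsilon\lVert\hat{x}\rVert\le 1$ and $\lVert T_i(x)-x\rVert=\lVert (A_i-I_n)(x-\hat{x})\rVert<\varepsilon\lVert x-\hat{x}\rVert\le r$ followed by the appeal to Lemma~\ref{ultrametric_lemma}---is identical in both.
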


\begin{proof}
    Denote by $F'$ the subgroup of $\GA(n,K)$ generated by the family $\{T_i\}_{i\in S}$.
    Let $\Phi \colon \GA(n,K) \to \GL(n,K)$ be the homomorphism sending an affine transformation to its linear part.
    Since $\Phi$ maps $\{T_i\}_{i\in S}$ one-to-one onto the basis $\{A_i\}_{i\in S}$ of $F$, the group $F'$ is free with $\{T_i\}_{i\in S}$ as basis \cite[Proposition 1.8]{LS} and $\phi:=\Phi\vert_{F'}\colon F'\to F$ is an isomorphism.
    
    If $T=M+u \in F'$ and $T\ne 1_{F'}$, then $M=\phi(T)\ne I_n$, so $\det(I_n-M)\ne 0$ and $(I_n-M)^{-1}\, u$ is a unique fixed point of $M$ in $K^n$.
    On the other hand, $\hat{x}$ is a fixed point of $T_i$ for all $i\in S$ and, as a consequence, for all $T\in F'$. Therefore, $F'$ acts on $K^n\setminus \{\hat{x}\}$ without nontrivial fixed points.

    Suppose additionally that the assumptions of the second part of the proposition are satisfied. Clearly, the sets $B[x,r]$, $B(x,r)$, $S[x,r]$ are contained in $K^n\setminus\{\hat{x}\}$. Since $F\le \SL(n,D,\varepsilon)$ and $\lVert u_i \rVert \le \varepsilon \lVert \hat{x} \rVert \le \varepsilon_0 \lVert \hat{x} \rVert \le 1$ for all $i\in S$, we have 
    $F'\le \SA(n,D,\varepsilon)$.
    The choice of $\varepsilon_0$ guarantees that
    \begin{equation*}
        \lVert T_i(x)-x \rVert =\lVert (A_i-I_n)\, x+u_i \rVert =\lVert (A_i-I_n)(x-\hat{x}) \rVert 
        <\varepsilon \lVert x-\hat{x} \rVert \le\varepsilon_0 \lVert x-\hat{x} \rVert  \le r
    \end{equation*}
    for all $i\in S$. Therefore, by Lemma \ref{ultrametric_lemma}, $F'$ acts on $B[x,r]$, $B(x,r)$ and $S[x,r]$ without nontrivial fixed points.
\end{proof}

\begin{uw}\label{remark_open_ball}
    To ensure that the group $F'$ from Proposition \ref{free_in_affine} acts without nontrivial fixed points on the open ball $B(x,r)$, a weaker assumption on $r$ is sufficient, namely $0< r\le \lVert x-\hat{x} \rVert $.
\end{uw}

\subsection{The case when the characteristic of $K$ is zero}

Let $K$ be a field of characteristic $0$. We will identify its prime field with the field $\mathbb{Q}$ of rational numbers. If $K$ is equipped with a non-Archimedean valuation and $D$ is the ring of integers of $K$, then $\mathbb{Z}\subseteq D$ and $\SL(2,\mathbb{Z})\le \SL(2,D)$.

 We will rely on the following remarkable result due to Neumann \cite{N} and Magnus \cite{M}.

\begin{tw}[{\cite[Theorems 3 and 4]{M}}]\label{Neumann_Magnus}
    The matrices
    \begin{equation}\label{Magnus_matrices}
        A_0:=\left(
    \begin{array}{cc}
            1 & 1 \\
            1 & 2
        \end{array}
    \right),\quad
    A_n:=\left(
    \begin{array}{cc}
            4n^2 + 1 & 2n \\
            2n & 1
        \end{array}
    \right),\quad
        n=1,2,\dots
    \end{equation}
    form a basis of a free subgroup $F$ of $\SL(2,\mathbb{Z})$. Moreover, every element of $F\setminus\{I_2\}$ is nonparabolic, i.e., its trace is different from $\pm 2$. 
\end{tw}

\begin{uw}
    The first part of the above-stated result corresponds to \cite[Theorem 3]{M}, which is directly based on Neumann's paper \cite{N}. The second part is contained in \cite[Theorem 4]{M} and proved thereafter. 
\end{uw}

 Taking into account Theorem \ref{Neumann_Magnus} and Lemma \ref{trace}, we obtain an immediate conclusion.

\begin{wn}\label{free_in_char_0}
    If $(K,\lvert\blank\rvert)$ is a non-Archimedean valued field and $\ch{K}=0$, the group $\SL(2,\mathbb{Z})\le \SL(2,D)$ contains a free subgroup of infinite rank acting on $K^2\setminus\{\mathbf{0}\}$, and (for any $r>0$) on $B[\mathbf{0},r]\setminus\{\mathbf{0}\}$ and $S[\mathbf{0},r]$ without nontrivial fixed points.
\end{wn}

If $(K,\lvert\blank\rvert)$ is a non-Archimedean valued field of characteristic $0$, we distinguish two cases depending on the characteristic of $k$, the residue field of $K$.
We either have $\ch{k}=0$, in which case the valuation $\lvert\blank\rvert$ restricted to the prime field $\mathbb{Q} \subseteq K$ is trivial, or $\ch{k}=p$ for a prime number $p$. In the latter case, the valuation restricted to $\mathbb{Q}$ is equivalent to the $p$-adic valuation $\lvert\blank\rvert_p$, given by \eqref{p_adic_val}.

The following results provide the existence of some free groups of linear and affine isometries of the normed space $(K^n,\lVert \blank \rVert )$, in the case when $\ch{K}\ne \ch{k}$, acting on certain sets without nontrivial fixed points. They will be used in Section $5$.

\begin{prop}\label{prop_SL(2,Z)}
    Let $(K,\lvert\blank\rvert)$ be a non-Archimedean valued field and $k$ its residue field. Assume that $\ch{K}=0$ and $\ch{k}>0$. Then, for any $\varepsilon\in (0,1]$, there exists an infinite set $S(\varepsilon)\subseteq \mathbb{N}$ such that the family $\{A_i\}_{i\in S(\varepsilon)}$, where $A_i$ are as in \eqref{Magnus_matrices}, is a basis of a free subgroup $F(\varepsilon)\le \SL(2,\mathbb{Z},\varepsilon)$.
\end{prop}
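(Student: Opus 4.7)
The plan is to produce $S(\varepsilon)$ by taking powers of the residue characteristic $p$, exploiting that the Magnus--Neumann matrix $A_n$ has $A_n-I_2$ with entries of size $O(n^2)$, so that if $n$ is highly divisible by $p$ then $A_n$ becomes close to $I_2$ in the given valuation. Since $\ch{K}=0$ while $\ch{k}=p>0$, the restriction of $\lvert\blank\rvert$ to the prime field $\mathbb{Q}\subseteq K$ is equivalent to the $p$-adic valuation $\lvert\blank\rvert_p$, so $\lvert p\rvert<1$ and hence $\lvert 2\,p^m\rvert\to 0$ and $\lvert 4\,p^{2m}\rvert\to 0$ as $m\to\infty$.

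From \eqref{Magnus_matrices} one reads off
\[A_n-I_2=\begin{pmatrix} 4n^2 & 2n \\ 2n & 0 \end{pmatrix},\]
so $A_n\in\SL(2,\mathbb{Z},\varepsilon)$ is equivalent to $\lvert 4n^2\rvert<\varepsilon$ and $\lvert 2n\rvert<\varepsilon$. By the preceding paragraph, for some $m_0\in\mathbb{N}$ both inequalities are satisfied whenever $n=p^m$ with $m\ge m_0$. I would therefore set $S(\varepsilon):=\{p^m : m\ge m_0\}$; this is an infinite subset of $\mathbb{N}$, and $A_i\in\SL(2,\mathbb{Z},\varepsilon)$ for every $i\in S(\varepsilon)$.

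It remains to verify that $\{A_i\}_{i\in S(\varepsilon)}$ is a basis of a free subgroup $F(\varepsilon)$ with $F(\varepsilon)\le\SL(2,\mathbb{Z},\varepsilon)$. Freeness is immediate from Theorem \ref{Neumann_Magnus}, since every subfamily of a free basis freely generates the subgroup it spans, with that subfamily as basis. The nontrivial point -- which I expect to be the main obstacle -- is that the containment requires $\SL(2,\mathbb{Z},\varepsilon)$ itself to be a subgroup of $\SL(2,\mathbb{Z})$, not merely a subset. For this I would verify directly from the definition that $\SL(2,D,\varepsilon)$ is closed under multiplication and inversion: writing $M=I_2+N$, $M'=I_2+N'$ with all entries of $N,N'$ of absolute value $<\varepsilon$, the strong triangle inequality together with $\varepsilon\le 1$ shows that $MM'-I_2=N+N'+NN'$ still has entries of absolute value $<\varepsilon$; closure under inverses in the $2\times 2$ case follows from the adjugate formula, whose off-diagonal entries are just the negatives of those of $M$. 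Once this subgroup property is established, the fact that the generators of $F(\varepsilon)$ lie in $\SL(2,\mathbb{Z},\varepsilon)$ forces the entire subgroup to lie there, completing the proof.
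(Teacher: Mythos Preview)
Your proposal is correct and follows exactly the same approach as the paper: bound the entries of $A_n-I_2$ and take $S(\varepsilon)=\{p^m:m\ge m_0\}$ using $\lvert p\rvert<1$. The paper's proof is very terse and simply asserts the conclusion once the entry bound is noted; you are more careful in spelling out the freeness of a subfamily of a free basis and the fact that $\SL(2,\mathbb{Z},\varepsilon)$ is actually a subgroup (the paper just calls it one when it is introduced, without proof), but these are details the paper leaves implicit rather than a different method.
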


\begin{proof}
    We can see from \eqref{Magnus_matrices} that, for $n\ge 1$, all entries of $(A_n-I_2)$ have valuation bounded from above by $\lvert n \rvert$. Let $p:=\ch{k}$. Since $\lvert p \rvert<1$, we have $A_{p^m}\in \SL(2,\mathbb{Z},\varepsilon)$ for sufficiently large $m\in\mathbb{N}$.
\end{proof}

\begin{tw}\label{Sato-like}
    Let $(K,\lvert\blank\rvert)$ be a non-Archimedean valued field and $k$ its residue field. Assume that $\ch{K}=0$ and $\ch{k}>0$. Let $n\ge 2$, $\hat{x}=(\hat{x}_1,\hat{x}_2,\dots,\hat{x}_n)\in K^n$, $x=(x_1,x_2,\dots,x_n)\in K^n$, $(x_1,x_2)\ne (\hat{x}_1,\hat{x}_2)$, $0<r<\lVert (x_1,x_2)-(\hat{x}_1,\hat{x}_2) \rVert $ and $\varepsilon\in (0,1]$. Then, there exists a free subgroup $F'\le \SA(n,D,\varepsilon)$ of infinite rank acting without nontrivial fixed points on $K^n\setminus \left(\{(\hat{x}_1,\hat{x}_2)\}\times K^{n-2}\right)$, the balls $B[x,r]$, $B(x,r)$, and the sphere $S[x,r]$.

    Moreover, $F'$ can be chosen so that $F'\le \SA(n,\mathbb{Z})$ if $(\hat{x}_1,\hat{x}_2)\in \mathbb{Z}^2$, and $F'\le \SL(n,\mathbb{Z},\varepsilon)$ if $(\hat{x}_1,\hat{x}_2)=(0,0)$.
\end{tw}

\begin{proof}
    First, let us consider the case $n=2$.
    Let $\varepsilon_0$ be defined as in \eqref{choice_of_epsilon} and $\varepsilon_1:=\min(\varepsilon,\varepsilon_0)$.
    Applying Proposition \ref{prop_SL(2,Z)}, we obtain a free subgroup $F(\varepsilon_1)\le \SL(2,\mathbb{Z},\varepsilon_1)$ with $\{A_i\}_{i\in S(\varepsilon_1)}$ as basis. Notice that $\det(M-I_2)=2-\tr M\ne 0$ for all nonidentity $M\in F(\varepsilon_1)$.
    A direct application of Proposition \ref{free_in_affine} yields a desired free subgroup $F'\le \SA(2,D,\varepsilon_1)\le \SA(2,D,\varepsilon)$ with a basis $\{T_i\}_{i\in S(\varepsilon_1)}$ as in \eqref{T_i}.

    Assume now that $n>2$. Let us again apply Proposition \ref{free_in_affine}, but with $2$ in place of $n$, the vectors $\hat{x}$, $x$ replaced by their projections on the first two coordinates, and with $\varepsilon_0$, $\varepsilon_1$ changed appropriately. We obtain a free subgroup $F''\le \SA(2,D,\varepsilon_1)\le \SA(2,D,\varepsilon)$ acting on $K^2\setminus\{(\hat{x}_1,\hat{x}_2)\}$ without nontrivial fixed points. As before, $\{T_i\}_{i\in S(\varepsilon_1)}$ from \eqref{T_i} is a basis of $F''$.
    Let $F':=\iota_{2,n}(F'')$, where $\iota_{2,n}$ is defined as in \eqref{canonical_embedding}. Clearly, $F'$ is a free subgroup of infinite rank of $\SA(n,D,\varepsilon)$. We will show that $F'$ satisfies the claim of our theorem. It is not hard to check (by examining the equation \eqref{canonical_embedding}) that $F'$ acts without nontrivial fixed points on the set $Y:=K^n\setminus \left(\{(\hat{x}_1,\hat{x}_2)\}\times K^{n-2}\right)$, which contains $B[x,r]$, $B(x,r)$ and $S[x,r]$.  
    As in the proof of Proposition \ref{free_in_affine}, we have
    $$
    \lVert \iota_{2,n}(T_i)(x)-x \rVert =\lVert T_i(x_1,x_2)-(x_1,x_2) \rVert <\varepsilon_0 \lVert (x_1,x_2)-(\hat{x}_1,\hat{x}_2) \rVert \le r
    $$
    for all $i\in S(\varepsilon_1)$. Therefore, by Lemma \ref{ultrametric_lemma}, $F'$ acts on $B[x,r]$, $B(x,r)$ and $S[x,r]$.

    The final assertion of our theorem follows easily from the definition of $u_i$ in Proposition \ref{free_in_affine} and the fact that the linear part of any element of $F'$ belongs (in any case) to $\SL(n,\mathbb{Z},\varepsilon)$.
\end{proof}

\subsection{The case when $K$ is a transcendental extension of its prime field}

Let $F_2$ be the free group of reduced words over the alphabet $\{a,b, a^{-1},b^{-1}\}$ and $R$ some commutative ring with identity. For any pair $A,B\in\SL(2,R)$, the mapping $a\mapsto A, b\mapsto B$ extends uniquely to a homomorphism $\rho_{A,B}\colon F_2 \to \SL(2,R)$, called a {\em representation} of $F_2$. If such a homomorphism is injective, the representation is called {\em faithful}. The function $\chi_{A,B}\colon F_2 \to R$, $\chi_{A,B}(g):=\tr(\rho_{A,B}(g))$, where $\tr$ stands for the matrix trace, is called the {\em character} of a representation $\rho_{A,B}$. It follows from well-known properties of the trace that $\chi_{A,B}(ghg^{-1})=\chi_{A,B}(h)$ for $g,h\in F_2$, so $\chi_{A,B}$ is constant on conjugacy classes in $F_2$. Hence, it induces a function $\tilde{\chi}_{A,B}\colon \Conj{F_2} \to R$, where $\Conj{F_2}$ is the set of all conjugacy classes in $F_2$.

Obviously, if $\chi_{A,B}(g)\neq 2$ for $g\neq 1_{F_2}$, then $\rho_{A,B}$ has trivial kernel, so it must be faithful. It turns out that if $R\subseteq K$ for a field $K$ with $\ch{K}>0$, the converse also holds, as we briefly explain in the following remark.

\begin{uw}\label{p>0}
    Let $K$ be a field with $\ch{K}=p>0$ and $A,B\in \SL(2,K)$ be such that $\rho_{A,B}$ is faithful. We claim that $\chi_{A,B}(g)\neq 2$ for $g\neq 1_{F_2}$. Indeed, if $\chi_{A,B}(g)=2$, we can see from \eqref{charpol} that $1$ is an eigenvalue of $\rho_{A,B}(g)$.
    Hence, $\rho_{A,B}(g)$ has the matrix
    $
         \left(\begin{array}{cc}
                1 & a \\
                0 & 1
            \end{array}\right)
    $, for some $a\in K$, in an appropriate basis of $K^2$.
    By direct multiplication and using the assumption on characteristic, we obtain
    $\rho_{A,B}(g^p)=I_2$.
    Since $\rho_{A,B}$ is injective, $g^p=1_{F_2}$, so $g=1_{F_2}$ because $F_2$ is torsion-free.
\end{uw}

\begin{wn}\label{corp>0}
    If $\ch{K}>0$, then any free subgroup of rank two in $\SL(2,K)$ acts on $K^2 \setminus \{\mathbf{0}\}$ without nontrivial fixed points.
\end{wn}

It is known that, for a given $w\in F_2$, the trace of $\rho_{A,B}(w)$ depends in a polynomial manner on the three values: $\tr A$, $\tr B$, $\tr AB$. This fact was first discovered by Fricke and Klein in \cite{FK} for matrices with real entries, using reasoning based on arguments in non-Euclidean geometry. Horowitz gave a constructive, algebraic proof of a stronger result \cite[Theorem 3.1]{Hor} for any free group of finite rank and for any commutative ring $R$ of characteristic zero with identity. Traina obtained a more explicit formula describing the polynomial relation  between  $\tr(\rho_{A,B}(w))$ and $\tr A$, $\tr B$, $\tr AB$. However, his paper \cite{Tr} concerns only matrices in $\SL(2,\mathbb{C})$ and some proofs are omitted.

We are going to formulate and prove an analogical result in the case of representations of $F_2$ in $\SL(2,K)$ for any field $K$ (of arbitrary characteristic). We will need an explicit construction of some polynomials of three variables $X,Y,Z$ over $\mathbb{P}$, the minimal subring of $K$ containing $1$, called also the {\em prime ring} of $K$. If $\ch{K}=0$, we identify $\mathbb{P}$ with $\mathbb{Z}$, otherwise $\mathbb{P}=\mathbb{F}_p:=\{0,1,\dots,p-1\}$ is the prime field of $K$ and is isomorphic to $\mathbb{Z}/p\mathbb{Z}$, where $p=\ch{K}$.

We are going to define a function $\Phi\colon \Conj{F_2} \to \mathbb{P}[X,Y,Z]$. For this purpose, we need some canonical way of choosing a representative of a given conjugacy class in $F_2$ (see \cite[Proposition 2.14]{LS} for a solution of the conjugacy problem for free groups). After conjugating by a suitable element, we can find in any class $W\in\Conj{F_2}$ a representative $w\in F_2$ of one the following forms:
\begin{equation}\label{postkan}
    w=1_{F_2}, w=a^{n_1}, w=b^{m_1}\; \textrm{or }
    w=a^{n_1}b^{m_1}\dots a^{n_k}b^{m_k} \;\textrm{for }  k\geq 1,
\end{equation}
where $n_i, m_i\in \mathbb{Z}\setminus \{0\}$ for $i=1,\dots, k$. If $W$ is not one of $[1_{F_2}]$, $[a^{n_1}]$, $[b^{m_1}]$, let us call each subword $s_i:=a^{n_i}b^{m_i}$, $i=1,\dots,k$, of $w$ a {\em syllable}. Then, a representative $w\in W$ of the form \eqref{postkan} is determined up to a cyclic permutation of syllables (the possible representatives vary only in the choice of an initial syllable). To avoid ambiguity, let us choose $w$ so that the vector $(\max\{\lvert n_1 \rvert, \lvert m_1 \rvert\}, \dots, \max\{\lvert n_k \rvert, \lvert m_k \rvert\})$ is maximal with respect to the lexicographic ordering in $\mathbb{N}^k$. If there are several such choices, then let us additionally maximize lexicographically $(n_1,m_1,\dots, n_k,m_k)\in \mathbb{Z}^{2k}$ among them. This particular $w\in W$ will be called the {\em canonical representative} of a conjugacy class $W$. The numbers
\begin{equation}\label{lengths}
    l_a(W):=\sum_{i=1}^{k} \lvert n_i \rvert, \quad l_b(W):=\sum_{i=1}^{k} \lvert m_i \rvert, \quad l(W):=l_a(W)+l_b(W)
\end{equation}
will be called the $a$-{\em length}, $b$-{\em length} and {\em length} of $W$, respectively.
We extend $l_a$, $l_b$ and $l$ to the cases $W=[1_{F_2}]$, $W=[a^{n_1}]$ or $W=[b^{m_1}]$ in the obvious way.

\begin{deff}\label{trace_poly}
Let $\Phi\colon \Conj{F_2} \to \mathbb{P}[X,Y,Z]$ be defined as follows. At the beginning, we specify $\Phi(W)$ for so called {\em special classes} $W$, namely we put
\begin{equation}\label{a,b,ab}
    \begin{array}{l}
        \Phi([1_{F_2}]):=2, \quad \Phi([a])=\Phi([a^{-1}]):=X, \quad \Phi([b])=\Phi([b^{-1}]):=Y, \\
        \Phi([ab])=\Phi([a^{-1}b^{-1}]):=Z, \quad \Phi([a^{-1}b])=\Phi([ab^{-1}]):=XY - Z, \\ \Phi([aba^{-1}b^{-1}])=\Phi([ab^{-1}a^{-1}b]):=X^2+Y^2+Z^2-XYZ-2,\\
        \Phi([aba^{-1}b])=\Phi([ab^{-1}a^{-1}b^{-1}]):=-X^2-Z^2+XYZ+2.
    \end{array}
\end{equation}

We will use the recursion on $l(W)$, $W\in \Conj{F_2}$. Notice that the cases $l(W)\in \{0,1\}$ are contained in \eqref{a,b,ab}.
Assume that $n\geq 2$ and we have already defined $\Phi$ for all the classes of length less than $n$. Let $l(W)=n$ and $w$ be the canonical representative of $W$ of the form \eqref{postkan}. If $w$ satisfies $\lvert n_1 \rvert\geq 2$, we can write $w=a^{n_1}u$, where $u=b^{m_1}\dots a^{n_k}b^{m_k}$ or $u=1_{F_2}$. Let us define
\begin{equation}\label{rek2}
        \Phi(W)=\Phi([a^{n_1}u]):=X \,\Phi ([a^{n_{1}-\mathrm{sgn}(n_1)}u])
        - \Phi([a^{n_{1}-2\,\mathrm{sgn}(n_1)}u]).
\end{equation}
If $\lvert m_1 \rvert\ge 2$, but $\lvert n_1 \rvert=1$ or $w=b^{m_1}$, we similarly write
$[w]=[b^{m_1}u]$, where $u=a^{n_2}b^{n_2}\dots a^{n_k}b^{m_k} a^{n_1}$ or $u=1_{F_2}$, and define
\begin{equation}\label{rek3}
        \Phi(W)=\Phi([b^{m_1}u]):=Y \,\Phi([b^{m_{1}-\mathrm{sgn}(m_1)}u])\\
        - \Phi([b^{m_{1}-2\mathrm{sgn}(m_1)}u]).
\end{equation}
Since the classes appearing on the right-hand side of the formula \eqref{rek2} or \eqref{rek3} are of lengths less than $l(W)$, the definition of $\Phi(W)$ is correct in both cases.

We have one remaining case to deal with, namely, all the exponents $n_i$, $m_i$ of $w$ in $\eqref{postkan}$ have absolute value $1$ and $W$ is not one of the special classes described by \eqref{a,b,ab}. In this case, there exist $1\leq i<j\leq k$ such that $n_i=n_j$.
Let us assume that such a pair $(i,j)$ is chosen to be as small as possible with respect to the lexicographic ordering in $\mathbb{N}^2$. We can thus write $[w]=[uv]$ for $u=s_i\dots s_{j-1}$, $v=s_j \dots s_k s_1\dots s_{i-1}$. Let us define

\begin{equation}\label{case+-1}
    \Phi(W)=\Phi([uv]):=\Phi([u]) \,\Phi([v]) - \Phi([u^{-1}v]).
\end{equation}
The definition is correct since $l([u])<l(W)$, $l([v])<l(W)$ and, by virtue of $n_i=n_j$,
there is a reduction in the product $u^{-1}v$, hence $l([u^{-1}v])<l(W)$.
The definition of $\Phi\colon \Conj{F_2} \to \mathbb{P}[X,Y,Z]$ is now complete.
\end{deff}

Let us write down some elementary properties of the trace of matrices in $\SL(2,K)$. They can be verified by a direct calculation \cite[p. 338]{FK} or by applying the Cayley--Hamilton theorem \cite[Chapter V: Lemma 18]{Baumslag}.
\begin{lem}\label{trace_identities}
    For any field $K$ and any $A, B\in\SL(2,K)$, the following identities hold:
    \begin{enumerate}
        \item $\tr A^{-1}=\tr A$.
        \item $\tr{A^{-1}B}=\tr A\cdot\tr B-\tr AB$.
    \end{enumerate}
\end{lem}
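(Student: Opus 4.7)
The plan is to derive both identities from a single structural fact: by the Cayley--Hamilton theorem applied to $A \in \SL(2,K)$, the matrix $A$ satisfies its own characteristic polynomial, which (since $\det A = 1$) takes the form $\lambda^2 - (\tr A)\lambda + 1$, exactly as recalled in \eqref{charpol}. This yields the identity
\[
A^2 - (\tr A)\,A + I_2 = 0.
\]
Multiplying on the right by $A^{-1}$ and rearranging gives the key relation
\[
A^{-1} = (\tr A)\, I_2 - A. \tag{$\star$}
\]
This is the only nontrivial input; everything else is linearity of the trace.

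For identity (1), I would simply take the trace of $(\star)$ and use $\tr I_2 = 2$, obtaining $\tr A^{-1} = 2\tr A - \tr A = \tr A$. For identity (2), I would right-multiply $(\star)$ by $B$, which gives $A^{-1}B = (\tr A)\, B - AB$, and then take the trace to conclude
\[
\tr(A^{-1}B) = (\tr A)(\tr B) - \tr(AB).
\]

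Alternatively, one may simply compute directly with a general matrix $A = \left(\begin{smallmatrix} a & b \\ c & d \end{smallmatrix}\right)$ satisfying $ad-bc=1$, so that $A^{-1} = \left(\begin{smallmatrix} d & -b \\ -c & a \end{smallmatrix}\right)$; then (1) follows from $d+a = \tr A$, and (2) is a short $2\times 2$ matrix multiplication. There is no real obstacle here; the only thing to be careful about is that identity $(\star)$ and the form of the characteristic polynomial rely crucially on the assumption $\det A = 1$, so the argument really does use the hypothesis $A \in \SL(2,K)$ and not merely $A \in \GL(2,K)$.
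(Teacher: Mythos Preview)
Your proof is correct and matches exactly the two approaches the paper indicates (direct computation or the Cayley--Hamilton theorem); in fact the paper does not spell out a proof at all but simply refers to these methods, so your argument via $A^{-1}=(\tr A)\,I_2-A$ is precisely the intended one.
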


Using the above-stated lemma and the definition of $\Phi$, we can inductively prove the following theorem.

\begin{tw}\label{char}
    Let $K$ be a field. For any $A,B\in\SL(2,K)$ and $w\in F_2$, we have the following equality:
    \begin{equation}\label{wielslad}
        \chi_{A,B}(w)=\tr(\rho_{A,B}(w))=\Phi([w])(\tr A,\tr B,\tr AB).
    \end{equation}
\end{tw}

\begin{proof}
    First, we will show \eqref{wielslad} for $w$ such that $[w]$ is one of the special classes from \eqref{a,b,ab}. The cases $w=1_{F_2}$, $w=a$, $w=b$, $w=ab$ are obvious. Using the part (1) of Lemma \ref{trace_identities}, we get \eqref{wielslad} for $w=a^{-1}$ and $w=b^{-1}$. Since
    $\tr A^{-1}B^{-1}=\tr B^{-1}A^{-1}=\tr AB$, the result follows for $w=a^{-1}b^{-1}$. From the part (2) of Lemma \ref{trace_identities}, we obtain
    \[\tr A^{-1}B=\tr A\cdot \tr B-\tr AB=\Phi([a^{-1}b])(\tr A,\tr B,\tr AB).\]
    Moreover, $\tr AB^{-1}=\tr B^{-1}A=\tr A^{-1}B$, which yields the result for $w=a^{-1}b$ and $w=ab^{-1}$. For the case of $w=aba^{-1}b^{-1}$, we calculate
    \begin{equation*}
    \begin{split}
        \tr ABA^{-1}B^{-1}&=\tr A\cdot \tr BA^{-1}B^{-1}-\tr A^{-1}BA^{-1}B^{-1}\\
        &=(\tr A)^2 - (\tr A^{-1}B\cdot\tr A^{-1}B^{-1}-\tr B^{-2})\\
        &=(\tr A)^2+\tr B^2 - (\tr A\cdot\tr B-\tr AB)\cdot\tr AB\\
        &=(\tr A)^2+(\tr B)^2-2+(\tr AB)^2 - \tr A\cdot\tr B\cdot\tr AB,
    \end{split}
    \end{equation*}
    hence $\tr ABA^{-1}B^{-1}=\Phi([aba^{-1}b^{-1}])(\tr A,\tr B,\tr AB)$.
    Moreover, $\tr AB^{-1}A^{-1}B=\tr BAB^{-1}A^{-1}=\tr ABA^{-1}B^{-1}$, so \eqref{wielslad} holds for $w=ab^{-1}a^{-1}b$ as well.
    To deal with the remaining two-syllable cases simultaneously, let $\varepsilon\in \{1,-1\}$. We calculate
    \begin{equation*}
    \begin{split}
        \tr AB^{\varepsilon}A^{-1}B^{\varepsilon}&=\tr AB^{\varepsilon}A^{-1}\cdot \tr B^{\varepsilon}-\tr AB^{-\varepsilon}A^{-1}B^{\varepsilon}\\
        &=(\tr B)^2 - \tr ABA^{-1}B^{-1}\\
        &=-(\tr A)^2-(\tr AB)^2 + \tr A\cdot\tr B\cdot\tr AB+2,
    \end{split}
    \end{equation*}
    hence $\tr AB^{\varepsilon}A^{-1}B^{\varepsilon}=\Phi([ab^{\varepsilon}a^{-1}b^{\varepsilon}])(\tr A,\tr B,\tr AB)$.

    We complete the proof by induction on $l([w])$. Assume that $n\ge 2$ and \eqref{wielslad} holds for any $v\in F_2$ with $l([v])<n$. Let $l([w])=n$ and $W:=[w]$. We may assume that $w$ is the canonical representative of $W$ of the form \eqref{postkan}.
    If $\lvert n_1 \rvert\ge 2$, we denote $U:=\rho_{A,B}(u)$, where $u$ is such as in \eqref{rek2}. Then, by \eqref{rek2} and the inductive assumption, we obtain the relation
    \begin{equation}\label{traces_in_proof}
        \Phi([w])(\tr A,\tr B,\tr AB) =  \tr A \cdot \tr A^{n_1 - \mathrm{sgn}(n_1)}U- \tr A^{n_1 - 2\mathrm{sgn}(n_1)}U.
    \end{equation}
    Using Lemma \ref{trace_identities}, we conclude that the right-hand side of \eqref{traces_in_proof} is equal to $\tr A^{n_1}U=\tr(\rho_{A,B}(w))$.

    If $\lvert m_1 \rvert\geq 2$, but $\lvert n_1 \rvert=1$ or $w=b^{m_1}$, we proceed similarly using \eqref{rek3}, the inductive assumption and Lemma \ref{trace_identities}.

    If $\Phi(W)$ is defined by \eqref{case+-1}, we put $U:=\rho_{A,B}(u)$, $V:=\rho_{A,B}(v)$. As before, we obtain
    \begin{equation*}
    \begin{split}\Phi([w])(\tr A,\tr B,\tr AB) &=\tr U\cdot \tr V - \tr U^{-1}V\\&=\tr UV=\tr(\rho_{A,B}(w)),
    \end{split}
    \end{equation*}
    which finishes the proof.
\end{proof}

In our next theorem, we prove an interesting property of polynomials obtained from $\Phi([w])$ after the substitution of three rational functions satisfying certain conditions. We use an auxiliary valuation on $K(X)$.

\begin{tw}\label{degree_of_psi}
    Let $K$ be a field and $K(X)$ the field of rational functions over $K$. Let $K(X)$ be equipped with a non-Archimedean valuation $\lvert\blank\rvert$ whose restriction to the subfield $K\subseteq K(X)$ is trivial. If $f,g,h\in K(X)$ satisfy
    \begin{equation}\label{fgh}
        \lvert f \rvert>1, \quad \lvert g \rvert>1, \quad \lvert h \rvert=\lvert fg \rvert=\lvert fg-h \rvert,
    \end{equation}
    then, for any $W\in\Conj{F_2}$, $W\ne [1_{F_2}]$, the rational function $\Psi(W):=\Phi(W)(f, g, h)\in K(X)$ satisfies
    \begin{equation}\label{val_of_psi_formula}
        \lvert \Psi(W) \rvert = \lvert f \rvert^{l_a(W)} \cdot \lvert g \rvert^{l_b(W)} >1,
    \end{equation}
    where $l_a$, $l_b$ are defined by \eqref{lengths}. In particular, $\Psi(W)\not\in K$ for $W\ne [1_{F_2}]$.
\end{tw}

\begin{proof}
    First, we will show \eqref{val_of_psi_formula} for special classes $W\ne [1_{F_2}]$. It is clear for
    $W\in \{[a], [a^{-1}], [b], [b^{-1}]\}$.
    By \eqref{fgh}, we also have
    \begin{align*}
        \lvert \Psi([ab]) \rvert&=\lvert \Psi([a^{-1}b^{-1}]) \rvert=\lvert h \rvert=\lvert f \rvert^1\cdot \lvert g \rvert^1,\\
        \lvert \Psi([a^{-1}b]) \rvert&=\lvert \Psi([ab^{-1}]) \rvert=\lvert fg-h \rvert=\lvert f \rvert^1\cdot \lvert g \rvert^1,\\
        \lvert \Psi([aba^{-1}b^{-1}]) \rvert&=\lvert \Psi([ab^{-1}a^{-1}b]) \rvert=\lvert f^2 + g^2 + h^2 - fgh -2 \rvert\\
        &=\lvert f^2 + g^2 - 2 - (fg-h)h \rvert,\\
        \lvert \Psi([aba^{-1}b]) \rvert&=\lvert \Psi([ab^{-1}a^{-1}b^{-1}]) \rvert=\lvert -f^2-h^2+fgh+2 \rvert\\
        &=\lvert (fg-h)h - (f^2-2) \rvert.
    \end{align*}
    Since
    \[\lvert f^2+g^2-2 \rvert\le \max\{\lvert f \rvert^2, \lvert g \rvert^2\}<\lvert f \rvert^2\cdot \lvert g \rvert^2=\lvert fg \rvert^2=\lvert fg-h\lvert\blank\rvert h \rvert=\lvert (fg-h)h \rvert\]
    and $\lvert f^2-2 \rvert=\lvert f \rvert^2<\lvert (fg-h)h \rvert$, we obtain
    \begin{align*}
        \lvert \Psi([aba^{-1}b^{-1}]) \rvert&=\lvert \Psi([ab^{-1}a^{-1}b]) \rvert=\lvert \Psi([aba^{-1}b]) \rvert=\lvert \Psi([ab^{-1}a^{-1}b^{-1}]) \rvert\\
        &=\lvert (fg-h)h \rvert=\lvert f \rvert^2 \cdot \lvert g \rvert^2,
    \end{align*}
    as desired.

    Assume that $n\geq 2$ and \eqref{val_of_psi_formula} holds for all classes of length less than $n$. Let $l(W)=n$ and $w$ be the canonical representative of $W$ of the form \eqref{postkan}. Assume that $\lvert n_1 \rvert\ge 2$ and let us abbreviate
    the elements of $F_2$ on the right-hand side of \eqref{rek2} by $w_1$ and $w_2$. Thus we have $\Psi(W)=f \,\Psi([w_1]) - \Psi([w_2])$. Examining the $a$-length and $b$-length of $[w_1]$ and $[w_2]$, we get $l_a([w_1])=l_a(W)-1$, $l_b([w_1])=l_b(W)$ and $l_a([w_2])\le l_a(W)-2$, $l_b([w_2])\le l_b(W)$. We deduce that $l([w_1])<l(W)$ and $l([w_2])< l(W)$. By the inductive assumption,
    \begin{align*}
    \lvert \Psi([w_1]) \rvert&=\lvert f \rvert^{l_a([w_1])}\cdot \lvert g \rvert^{l_b([w_1])}=\lvert f \rvert^{l_a(W)-1} \cdot \lvert g \rvert^{l_b (W)},\\
    \lvert \Psi([w_2]) \rvert&=\lvert f \rvert^{l_a([w_2])}\cdot \lvert g \rvert^{l_b([w_2])}\le \lvert f \rvert^{l_a(W)-2} \cdot \lvert g \rvert^{l_b (W)}.
    \end{align*}
    Therefore,
    \[\lvert f \, \Psi([w_1]) \rvert =\lvert f \rvert^{l_a(W)} \cdot \lvert g \rvert^{l_b (W)} >\lvert \Psi([w_2]) \rvert,\] 
    so $\lvert \Psi(W) \rvert=\lvert f \rvert^{l_a(W)}\cdot \lvert g \rvert^{l_b(W)}$.

    If $\lvert m_1 \rvert\geq 2$, but $\lvert n_1 \rvert=1$ or $w=b^{m_1}$, we proceed analogically using \eqref{rek3}.

    If $\Phi(W)$ is defined by \eqref{case+-1}, we have
    $l_a(W)=l_a([u])+l_a([v])$, $l_b(W)=l_b([u])+l_b([v])$. Hence, by the inductive assumption,
    \[\lvert \Psi([u])\,\Psi([v]) \rvert=\lvert f \rvert^{l_a([u])}\cdot\lvert g \rvert^{l_b([u])}\cdot\lvert f \rvert^{l_a([v])}\cdot\lvert g \rvert^{l_b([v])}=\lvert f \rvert^{l_a(W)}\cdot \lvert g \rvert^{l_b(W)}.\]
    Because of a reduction in the product $u^{-1}v$, we have
    $l_a([u^{-1}v])< l_a(W)$, $l_b([u^{-1}v]) \le l_b(W)$, so
    \[\lvert \Psi([u^{-1}v]) \rvert=\lvert f \rvert^{l_a([u^{-1}v])}\cdot \lvert g \rvert^{l_b([u^{-1}v])}<\lvert f \rvert^{l_a(W)}\cdot \lvert g \rvert^{l_b(W)}=\lvert \Psi([u])\,\Psi([v]) \rvert.\]
    We finally get $\lvert \Psi(W) \rvert=\lvert \Psi([u])\,\Psi([v]) \rvert$, from which \eqref{val_of_psi_formula} follows.
\end{proof}

Let us formulate an important corollary from Theorem \ref{degree_of_psi}.

\begin{wn}\label{wnprz}
    Let $K$ be a field and $K_0$ its prime field. Let $K_0(X)$ be equipped with a non-Archimedean valuation $\lvert\blank\rvert$ trivial on $K_0$ and let $f,g,h\in K_0(X)$ satisfy \eqref{fgh}. If $t\in K$ is transcendental over $K_0$, then $\Phi(W)(f(t),g(t),h(t)) \neq 2$ for all classes $W\in\Conj{F_2}$ distinct from $[1_{F_2}]$.
\end{wn}

\begin{proof}
    Suppose that $\Phi(W)(f(t),g(t),h(t))=2$ for some $W\in \Conj{F_2}$, $W\ne [1_{F_2}]$, and let $\Psi(W)\in K_0(X)$ be as in Theorem \ref{degree_of_psi}. Then $t$ would satisfy the equation $\Psi(W)(t)-2=0$. However, by virtue of Theorem \ref{degree_of_psi}, the rational function $\Psi(W)$ is not constant, neither is $\Psi(W)-2$. Therefore $t$ would be algebraic over $K_0$, which leads to a contradiction.
\end{proof}

Let us introduce a family of nontrivial non-Archimedean valuations on $K(X)$, each being trivial when restricted to $K$. Namely, for $q$ being an irreducible polynomial in $K[X]$, we define $\lvert\blank\rvert_q$ on $K(X)$ by putting $\lvert 0 \rvert_q:= 0$ and
\begin{equation}\label{irr_pol_valuation}
    \left\lvert q^k \frac{f_1}{f_2}\right \rvert_q:=2^{-k} \quad \text{for } k\in\mathbb{Z}, \; q\nmid f_1, q\nmid f_2, \; f_1, f_2\in K[X],\; f_2\ne 0.
\end{equation}

\begin{lem}\label{good_rational_functions}
    Let $K_0$ be a prime field and $q\in K_0[X]$ be the irreducible polynomial specified as below:
    \begin{equation}\label{q}
          q:=\begin{cases}
         2 X + 1 & \text{if } K_0 \ne \mathbb{F}_2,\\
         X^2+X+1 & \text{if } K_0 = \mathbb{F}_2.
        \end{cases}
    \end{equation}
    Then, the conditions \eqref{fgh} are satisfied for the valuation $\lvert\blank\rvert_q$ on $K_0(X)$ and $f,g,h\in K_0(X)$ defined by $f=g:=q+q^{-1}$, $h:=q^{-2}(q^4+X+1)$.
\end{lem}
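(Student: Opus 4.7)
The plan is a direct verification using the $q$-adic valuation $v_q$ on $K_0(X)$, normalised so that $\lvert u \rvert_q = 2^{-v_q(u)}$ for $u\in K_0(X)^*$. Each condition in \eqref{fgh} will be checked by reducing the relevant polynomial modulo $q$.

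First I would compute $\lvert f \rvert_q = \lvert g \rvert_q$. Writing $f = g = q^{-1}(q^2+1)$ and noting $q^2+1 \equiv 1 \pmod{q}$, we get $v_q(f) = -1$, so $\lvert f \rvert_q = \lvert g \rvert_q = 2 > 1$. Next, to obtain $\lvert h \rvert_q = 4$, one writes $h = q^{-2}(q^4+X+1)$ and checks $q \nmid q^4+X+1$, which reduces to $q \nmid X+1$. This splits into the two cases from \eqref{q}: if $K_0 \ne \mathbb{F}_2$ with $q = 2X+1$, the image of $X+1$ in $K_0[X]/(q) \cong K_0$ is $\tfrac{1}{2} \ne 0$; if $K_0 = \mathbb{F}_2$ with $q = X^2+X+1$, then $\deg(X+1) < \deg q$, so again $q \nmid X+1$. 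Hence $\lvert h \rvert_q = 4 = \lvert fg \rvert_q$.

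The main step is the computation of $fg-h$ and its valuation. Expanding,
\[fg - h = (q+q^{-1})^2 - q^{-2}(q^4+X+1) = q^2 + 2 + q^{-2} - q^2 - q^{-2}(X+1) = 2 - q^{-2}X.\]
For $K_0 \ne \mathbb{F}_2$ one has $\lvert 2 \rvert_q = 1$ and $\lvert q^{-2}X \rvert_q = 4$ (since $q = 2X+1$ gives $X \equiv -\tfrac{1}{2} \pmod{q}$), so by the isosceles property $\lvert fg-h \rvert_q = 4$. For $K_0 = \mathbb{F}_2$ one has $2 = 0$, whence $fg-h = q^{-2}X$; since $\deg X < \deg q$, $X$ is a $q$-adic unit, and again $\lvert fg-h \rvert_q = 4$. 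Combining, $\lvert h \rvert_q = \lvert fg \rvert_q = \lvert fg-h \rvert_q = 4$, which completes the verification of \eqref{fgh}.

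The only mildly delicate point — and presumably the reason $q$ is chosen differently depending on $\ch K_0$ — is ensuring $\lvert X \rvert_q = 1$ in the computation of $\lvert fg-h \rvert_q$. In characteristic different from $2$, the term $2$ survives in $fg-h$ and creates the required valuation imbalance with $q^{-2}X$; in characteristic $2$ that term vanishes, and one compensates by taking $q$ of degree exceeding that of $X$, so that $X$ remains a $q$-adic unit. Apart from this case split, the entire proof is a mechanical calculation.
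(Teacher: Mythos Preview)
Your proof is correct and follows the same approach as the paper's: compute $fg-h=2-q^{-2}X$ and verify $q\nmid X$ and $q\nmid X+1$, just with more detail filled in. One minor remark: your closing commentary misidentifies the reason for the case split --- the point is simply that $2X+1=1$ in $\mathbb{F}_2[X]$ is not an irreducible polynomial, not any subtlety about $\lvert X\rvert_q$; in both cases $q\nmid X$ holds and the paper handles $\lvert 2-q^{-2}X\rvert_q=\lvert q^{-2}X\rvert_q$ uniformly via $\lvert 2\rvert_q\le 1<4$.
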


\begin{proof}
We have $\lvert f \rvert_{q}=\lvert g \rvert_{q}=2$ and $\lvert h \rvert_q=4$ since $q\nmid X+1$. We also have $\lvert fg-h \rvert_q=\lvert 2-q^{-2}X \rvert_q=\lvert q^{-2}X \rvert_q=4$ since $q\nmid X$.
\end{proof}

Now we are ready to prove results analogical to those in Proposition \ref{prop_SL(2,Z)} and Theorem \ref{Sato-like}, but in the case of $K$ being a transcendental extension of its prime field $K_0$.

\begin{tw}\label{pos_char}
    Let $(K,\lvert\blank\rvert)$ be a non-Archimedean valued field, $D$ its ring of integers and $K_0$ its prime field. Let $q\in K_0[X]$ be chosen according to \eqref{q}. If $K$ is a transcendental extension of $K_0$, then there exists $t\in D$ such that the matrices $A,B$ given by
    \begin{equation}\label{AB2}
        A:=\left(
        \begin{array}{cc}
                q(t^2) & t            \\
                0       & (q(t^2))^{-1}
            \end{array}
        \right),\quad
        B:=\left(
        \begin{array}{cc}
                q(t^2)                   & 0            \\
                (q(t^2))^{-2}\cdot t & (q(t^2))^{-1}
            \end{array}
        \right)
    \end{equation}
    form a basis of a free subgroup $\left<A,B\right>\le \SL(2,D)$ that acts on on $K^2\setminus\{\mathbf{0}\}$, and (for any $r>0$) on $B[\mathbf{0},r]\setminus\{\mathbf{0}\}$ and $S[\mathbf{0},r]$ without nontrivial fixed points.

    Moreover, if the valuation $\lvert\blank\rvert$ is nontrivial, then, for any $\varepsilon\in(0,1]$, $t$ can be chosen so that $A,B\in \SL(2,D,\varepsilon)$.
\end{tw}

\begin{proof}
    If the valuation $\lvert\blank\rvert$ is trivial, let $t\in K$ be any element transcendental over $K_0$; hence, $q(t^2)\ne 0$ and $A,B$ are well-defined elements of $\SL(2,K)=\SL(2,D)$.
    If $\lvert\blank\rvert$ is nontrivial, then, for any $\varepsilon\in(0,1]$, we can always find an element $t\in K$ transcendental over $K_0$ and satisfying  $\lvert t \rvert<\varepsilon$. We then have $\lvert q(t^2)-1 \rvert\le \lvert t^2 \rvert<\varepsilon$ and $\lvert q(t^2) \rvert=1$. Since $q(t^2)$ belongs to the coset $1+B(0,\varepsilon)$ modulo the ideal $B(0,\varepsilon)$ in $D$, so does $(q(t^2))^{-1}$. Therefore, $A,B\in \SL(2,D,\varepsilon)$.

    Let $f,g,h\in K_0[X]$ be defined as in Lemma \ref{good_rational_functions}. By a direct calculation,
    $\tr A=f(t^2)$, $\tr B=g(t^2)$ and
    $\tr AB=(q(t^2))^{2}+(q(t^2))^{-2}+(q(t^2))^{-2}\,t^2=h(t^2)$.

    Let $\rho_{A,B}\colon F_2 \to \SL(2,D)$ be the representation of $F_2$ specified by $A,B$ from \eqref{AB2}. Assume that $w\in F_2$, $w\ne 1_{F_2}$. From Theorem \ref{char} we get
    \[\tr(\rho_{A,B}(w))=\Phi([w])(\tr A,\tr B,\tr AB)=\Phi([w])(f(t^2),g(t^2),h(t^2)).\]
    Since $t^2$ is transcendental over $K_0$, we deduce from Corollary \ref{wnprz} that $\tr(\rho_{A,B}(w))\ne 2$, which implies that $\rho_{A,B}(w)\ne I_2$ and (by virtue of Lemma \ref{trace}) $\rho_{A,B}(w)$ has no fixed points in $K^2\setminus \{\mathbf{0}\}$.
\end{proof}

\begin{tw}\label{affine_general}
    Let $(K,\lvert\blank\rvert)$ be a non-Archimedean nontrivially valued field that is a transcendental extension of its prime field $K_0$. Let $n\ge 2$, $\hat{x}=(\hat{x}_1,\hat{x}_2,\dots,\hat{x}_n)\in K^n$, $x=(x_1,x_2,\dots,x_n)\in K^n$, $(x_1,x_2)\ne (\hat{x}_1,\hat{x}_2)$, $0<r<\lVert (x_1,x_2)-(\hat{x}_1,\hat{x}_2) \rVert $ and $\varepsilon\in(0,1]$. Then, there exists a free subgroup $F'\le \SA(n,D,\varepsilon)$ of rank two acting on $K^n\setminus \left(\{(\hat{x}_1,\hat{x}_2)\}\times K^{n-2}\right)$, the balls $B[x,r]$, $B(x,r)$, and the sphere $S[x,r]$ without nontrivial fixed points.

    Moreover, $F'$ can be chosen so that $F'\le \SL(n,D,\varepsilon)$ if $(\hat{x}_1,\hat{x}_2)=(0,0)$.
\end{tw}

\begin{proof}
    The proof is essentially the same as that of Theorem \ref{Sato-like}. The only significant difference is that we take for $F(\varepsilon_1)$ the free group with basis $\{A,B\}$, where $A$, $B$ are the matrices \eqref{AB2} with $t$ chosen so that $A,B\in \SL(2,D,\varepsilon_1)$.
\end{proof}

\section{The Banach--Tarski paradox for subsets of $K^n$}

In this section, by $R$ we will mean $\mathbb{Z}$ if $\ch{K}\ne \ch{k}$, and $D$ otherwise.

Clearly, any subset $A$ of $K^n$ containing $\mathbf{0}$ is not paradoxical with respect to any group of linear isometries of $K^n$.
For balls in $K^n$ not containing $\mathbf{0}$, we obtain the following positive result.

\begin{tw}\label{balls_without_0}
    Let $(K,\lvert\blank\rvert)$ be a non-Archimedean nontrivially valued field, $k$ its residue field, $\varepsilon\in (0,1]$ and $n\geq 2$. Then any ball $B$ in $K^n$ not containing $\mathbf{0}$ is $\SL(n,R,\varepsilon)$-paradoxical using $4$ pieces.
\end{tw}

\begin{proof}
    We either have $\ch{K}=0\ne \ch{k}$, or $\ch{K}=\ch{k}$. In the latter case, $\lvert\blank\rvert$ is trivial on the prime field $K_0$ of $K$ and any element $t\in K^*$ with $\lvert t \rvert\neq 1$ is transcendental over  $K_0$, so $K$ is a transcendental extension of $K_0.$ 

    Assume that $B$ is a closed ball $B[x,r]$ in $K^n$ not containing $\mathbf{0}$. Then $\lVert x \rVert >r$, so $\lvert x_i \rvert>r$ for some $1\leq i \leq n.$ 
    
    If $i=1$, we put $\hat{x}:=\mathbf{0}$. From Theorem \ref{Sato-like} or \ref{affine_general}, it follows that there exists a non-Abelian free subgroup $F'\le \SL(n,R,\varepsilon)$ acting on $B[x,r]$ without nontrivial fixed points, which yields $\SL(n,R,\varepsilon)$-paradoxicality of $B[x,r]$ using $4$ pieces.

    Assume that $i>1$. Let $\gamma$ be a permutation of the set $\{1, \ldots, n\}$ with $\gamma (1)=i$ and $P\in \GL(n,R)$ with $P(e_j)=e_{\gamma (j)}$ for any $1\leq j \leq n.$
    If $V\in \SL(n,R,\varepsilon)$ and $M=PVP^{-1}$, then $m_{k,l}=v_{\gamma^{-1}(k),\gamma^{-1}(l)}$ for all $1\leq k,l\leq n$, so $M\in \SL(n,R,\varepsilon)$. Put $z=P^{-1}x.$ Then $\lvert z_1 \rvert=\lvert x_i \rvert>r$ and $P(B[z,r])=B[x,r]$, so $\mathbf{0}\not \in B[z,r].$ Thus, $B[z,r]$ is $SL(n,R,\varepsilon)$-paradoxical using $4$ pieces, so $B[x,r]=P(B[z,r])$ is $SL(n,R,\varepsilon)$-paradoxical using $4$ pieces too.

     The same holds for any open ball $B=B(x,r)$ in $K^n$ not containing $\mathbf{0}$ because the assumption $\lVert x \rVert \geq r$ is then sufficient by Remark \ref{remark_open_ball}.
\end{proof}

As in the case of balls, spheres not containing $\mathbf{0}$ are also $\SL(n,R,\varepsilon)$-paradoxical, but the number of pieces involved is expressed in a more complicated manner.

\begin{tw}\label{spheres}
    Let $(K,\lvert\blank\rvert)$ be a non-Archimedean nontrivially valued field, $k$ its residue field, $\varepsilon\in (0,1]$, $r\in \lvert K^{*} \rvert$ and $n\ge 2$. Then any sphere $S[x,r]$ in $K^n$ not containing $\mathbf{0}$ is $\SL(n,R,\varepsilon)$-paradoxical using $4$ pieces if $\lVert x \rVert >r$, and $5-(-1)^n$ pieces if $\lVert x \rVert <r$.
\end{tw}

\begin{proof}
    First, notice that if $\lVert x \rVert >r$, we can proceed exactly as in the proof of Theorem \ref{balls_without_0}. As before, Theorem \ref{Sato-like} or \ref{affine_general} provides the existence of a non-Abelian free subgroup $F'\le \SL(n,R,\varepsilon)$ acting on $S[x,r]$ without nontrivial fixed points.

    In the case $\lVert x \rVert <r$, we have $S[x,r]=S[\mathbf{0},r]$ by the isosceles property. Hence, we assume from now on that $x=\mathbf{0}$.
    Let us denote by $F(\varepsilon)$ a free subgroup of $\SL(2,R,\varepsilon)$ obtained from Proposition \ref{prop_SL(2,Z)} or Theorem \ref{pos_char}, depending on whether $\ch{K}$ and $\ch{k}$ are distinct or equal.

    Assume that $n=2k$, $k\ge 1$. We can treat $K^n$ as the product of $k$ copies of $K^2$. Let us define an embedding $\varphi\colon F(\varepsilon)\to \SL(n,R,\varepsilon)$ by putting
    \[\varphi(g)(y_1,\dots,y_{2k}):=\left(g(y_1,y_2),\dots,g(y_{2k-1},y_{2k})\right)\in (K^2)^k \cong K^n\]
    for $g\in F(\varepsilon)$, $(y_1,\dots,y_{2k})\in K^n$, i.e., $\varphi(g)$ acts like $g$ on each copy of $K^2$.
    Since $F(\varepsilon)$ acts without nontrivial fixed points on $K^2\setminus \{0_2\}$, so does $\varphi(F(\varepsilon))$ on $K^n\setminus \{\mathbf{0}\}$, hence also on $S[\mathbf{0},r]$. Therefore, $S[\mathbf{0},r]$ is $\SL(n,R,\varepsilon)$-paradoxical using $4$ pieces.

    Assume now that $n=2k+1$, $k\ge 1$. Let us partition $S[\mathbf{0},r]$, regarded as a subset of $K^{2k}\times K\cong  K^{n}$, into two disjoint sets. Namely, we put
    \[A_1:=S[0_{2k},r]\times B[0_1,r], \qquad A_2:=B(0_{2k},r)\times S[0_1,r].\] It is easy to see that $A_1$ and $A_2$ are $\SL(n,R,\varepsilon)$-invariant.
    
    Let us define two embeddings $\varphi_1,\varphi_2 \colon F(\varepsilon)\to \SL(n,R,\varepsilon)$ as follows
    \begin{equation*}
        \begin{split}
            \varphi_1(g)(y_1,\dots,y_{n}):=\left(g(y_1,y_2),\dots, g(y_{2k-1},y_{2k}),y_{n}\right)\in (K^2)^k \times K \cong K^n,\\
            \varphi_2(g)(y_1,\dots,y_{n}):=\left(y_1,g(y_2,y_3),\dots, g(y_{2k},y_{2k+1})\right)\in K\times (K^2)^k  \cong K^n,
        \end{split}
    \end{equation*}
    for $g\in F(\varepsilon)$, $(y_1,\dots,y_{n})\in K^n$.
    Our choice of $\varphi_1, \varphi_2$ guarantees that, for each $i\in \{1,2\}$, any nonidentity element of $\varphi_i(F(\varepsilon))$ has no fixed points in $A_i$. 
    Let $\{\sigma,\tau\}$ be a basis of $F(\varepsilon)$. As in $\eqref{characterization_four_pieces}$, we have the decompositions
    \begin{equation}\label{A_i_decomp}
        \begin{split}
        A_1=A_{11} \sqcup A_{12} \sqcup A_{13} \sqcup A_{14} = A_{11} \sqcup \varphi_1(\sigma)(A_{12}) = A_{13} \sqcup \varphi_1(\tau)(A_{14}),\\
        A_2=A_{21} \sqcup A_{22} \sqcup A_{23} \sqcup A_{24} = A_{21} \sqcup \varphi_2(\sigma)(A_{22}) = A_{23} \sqcup \varphi_2(\tau)(A_{24}).
        \end{split}
    \end{equation}
    Let us abbreviate $E:=A_{11}\sqcup A_{21}$, $E':=A_{13}\sqcup A_{23}$.
    Combining the equations \eqref{A_i_decomp}, we obtain
    \begin{equation}
        \begin{split}
            S[\mathbf{0},r]&=E \sqcup A_{12} \sqcup A_{22}\sqcup E' \sqcup A_{14} \sqcup A_{24}\\
            &= E \sqcup \varphi_1(\sigma)(A_{12}) \sqcup \varphi_2(\sigma)(A_{22})\\
            &= E' \sqcup \varphi_1(\tau)(A_{14}) \sqcup \varphi_2(\tau)(A_{24}),
        \end{split}
    \end{equation}
    which yields an $\SL(n,R,\varepsilon)$-paradoxical decomposition of $S[\mathbf{0},r]$ using $6$ pieces.
\end{proof}

\begin{wn}\label{K^n_minus_0}
    Let $(K,\lvert\blank\rvert)$ be a non-Archimedean, nontrivially valued field, $\varepsilon\in (0,1]$ and $n\ge 2$. Then, the union of any family of nonempty spheres in $K^n$ centered at $\mathbf{0}$ is $\SL(n,R,\varepsilon)$-paradoxical using $5-(-1)^n$ pieces. In particular, the claim holds for $K^n\setminus\{\mathbf{0}\}$ and $D^n\setminus \{\mathbf{0}\}$.
\end{wn}

\begin{proof}
    Let $Z\subseteq \lvert K^{*} \rvert$.
    Since the group elements witnessing the $\SL(n,R,\varepsilon)$-paradoxical decomposition of $S[\mathbf{0},r]$ in the proof of Theorem \ref{spheres} do not depend on $r\in Z$, the corresponding pieces from those decompositions can be gathered together, yielding a desired $\SL(n,R,\varepsilon)$-paradoxical decomposition of $\bigsqcup_{r\in Z} S[\mathbf{0},r]$. 
\end{proof}

Now we will prove that balls and spheres containing $\mathbf{0}$ admit a paradoxical decomposition with respect to a certain group of affine isometries of $K^n$.

\begin{tw}\label{sets_with_0}
    Let $(K,\lvert\blank\rvert)$ be a non-Archimedean, nontrivially valued field, $\varepsilon\in (0,1]$ and $n\geq 2.$ Then any ball and any sphere in $K^n$ containing $\mathbf{0}$ is $\SA(n,D,\varepsilon)$-paradoxical using $4$ pieces.
\end{tw}

\begin{proof}
    Let $x\in K^n$ and $r>0$. Let us choose a point $\hat{x}\in K^n$ satisfying $\lVert (x_1,x_2)-(\hat{x}_1,\hat{x}_2) \rVert >r$. Then, by Theorem \ref{Sato-like} or \ref{affine_general}, there exists a non-Abelian free subgroup $F'\le \SA(n,D,\varepsilon)$ acting on $B[x,r]$, $B(x,r)$ and $S[x,r]$ without nontrivial fixed points. It follows that $B[x,r]$, $B(x,r)$, and $S[x,r]$ (if $r\in \lvert K^{*} \rvert$) are $\SA(n,D,\varepsilon)$-paradoxical using $4$ pieces.
\end{proof}

Our next theorem is an analog of the strong result of Banach and Tarski \cite[Theorem 24]{BT}. It follows that if $K$ is a non-Archimedean discretely valued field with finite residue field (in particular, if $K$ is a locally compact non-Archimedean valued field, e.g., $K=\mathbb{Q}_p$), then any two balls in $K^n$ are equidecomposable with respect to a certain group of isometries.

\begin{tw}\label{strong_paradox_discrete_finite_residue}
    Let $(K,\lvert\blank\rvert)$ be a non-Archimedean discretely valued field with finite residue field (e.g., a locally compact non-Archimedean valued field) and $n\geq 2$. Then any two bounded subsets of $K^n$ with nonempty interiors are $\SA(n,D,K)$-equidecomposable.
\end{tw}

\begin{proof}
    Let $G:=\SA(n,D,K)$ and $A, A'\subseteq K^n$ be bounded sets with nonempty interiors. There exists a closed ball $E:=B[\mathbf{0},r]\subseteq K^n$ containing both $A$ and $A'$. By Lemma \ref{discrete_finite_residue}, we have
    $E\subseteq g_1 A \cup \dots \cup g_m A$ for some $m\in\mathbb{N}$ and translations $g_1,\dots,g_m \in G$.

    Since $E$ is $G$-paradoxical (by Theorem \ref{sets_with_0}), an application of \cite[Corollary 10.22]{W} yields that $A\sim_G E\sim_G A'$; hence, $A\sim_G A'$.
\end{proof}

\begin{wn}
    Let $(K,\lvert\blank\rvert)$ be a non-Archimedean discretely valued field with finite residue field, e.g., $(\mathbb{Q}, \lvert\blank\rvert_p)$ or $(\mathbb{Q}_p, \lvert\blank\rvert p)$, and $n\ge 2$. Then any bounded subset of $K^n$ with nonempty interior is $\SA(n,D,K)$-paradoxical.
\end{wn}

\begin{proof}
    By the previous theorem, any bounded subset of $K^n$ with nonempty interior is $\SA(n,D,K)$-equidecomposable with the ball $B[\mathbf{0},1]$, which is $\SA(n,D,K)$-paradoxical by Theorem \ref{sets_with_0}. The assertion follows from \cite[Proposition 3.5]{W}.
\end{proof}

It turns out that the whole space $K^n$ is also paradoxical with respect to a certain group of affine isometries.

\begin{tw}\label{whole_space}
    Let $(K,\lvert\blank\rvert)$ be a non-Archimedean, nontrivially valued field and $n\geq 2$. Let $L:=\mathbb{Z}$ if $\ch{K}=0$, and $L:=D$ if $\ch{K}>0$.  Then $K^n$ is $\SA(n,L)$-paradoxical using $5$ pieces.
\end{tw}

\begin{proof}
    First, we will prove the claim for $n=2$. If $\ch{K}=0$, let $F\le \SL(2,\mathbb{Z})$ be a free group from Corollary \ref{free_in_char_0}. If $\ch{K}=p>0$, then (by \cite[Theorem 14.2]{UC}) $K$ is necessarily a transcendental extension of its prime field $\mathbb{F}_p$, so let $F\le \SL(2,D)$ be the free group from Theorem \ref{pos_char}. Let us choose any point $c\in L^2\setminus\{\mathbf{0}\}$, we then have $\mathrm{Stab}_{F}(c)=\{I_2\}$. Let $C:=\{c\}\subseteq E:=K^2\setminus\{\mathbf{0}\}$. If $\alpha$ denotes the translation along $-c$, then $\alpha \in \SA(2, L)$ and $\alpha(C)=\{\mathbf{0}\}$. By virtue of Theorem \ref{zbC}, the space $K^2=E\sqcup \alpha(C)$ is $\SA(2, L)$-paradoxical using $5$ pieces.

    Assume now that $n> 2$. Let $\iota\colon \SA(2, L) \to \SA(n, L)$ be the restriction of the canonical embedding $\iota_{2,n}$, defined by \eqref{canonical_embedding}, to $\SA(2,L)$. Let also $f\colon K^n \to K^2$ be the projection onto the first two coordinates. Since $\iota$ and $f$ satisfy the condition \eqref{gof}, we infer from Lemma \ref{lift} that $K^n$ is $\SA(n,L)$-paradoxical using $5$ pieces.
\end{proof}

Let us briefly discuss the case when the valuation on $K$ is trivial. Then $D=K$ and the linear (or affine) isometries of $K^n$ are just the linear (or affine) bijections.

\begin{tw}\label{trivial_valuation}
    Let $K$ be a field and $n\ge 2$. 
    
    If $\ch{K}=0$, then $K^n$ is $\SA(n,\mathbb{Z})$-paradoxical using $5$ pieces. 
    
    If $\ch{K}=p>0$ and $t\in K$ is transcendental over $\mathbb{F}_p$, then $K^n$ is $\SA(n,\mathbb{F}_p(t))$-paradoxical using $5$ pieces.

    If $K$ is an algebraic extension of a finite subfield, then $K^n$ is not $\GA(n,K)$-paradoxical and $K^n\setminus \{\mathbf{0}\}$ is not $\GL(n,K)$-paradoxical.
\end{tw}

\begin{proof}
    Let $n\ge 2$. If $\ch{K}=0$, let $F\le \SL(2,\mathbb{Z})$ be a free group from Corollary \ref{free_in_char_0}. If $\ch{K}=p$ and $t\in K$ is transcendental over $\mathbb{F}_p$, let $F\le \SL(2,\mathbb{F}_p(t))$ be the free group from Theorem \ref{pos_char}. We further proceed exactly as in the proof of Theorem \ref{whole_space}.

    If $K$ is an algebraic extension of a finite field, then (see Example \ref{alg_ext_of_finite}) the group $\GL(n,K)$ is amenable, which excludes the existence of a $\GL(n,K)$-paradoxical decomposition of a $\GL(n,K)$-invariant set $K^n\setminus\{\mathbf{0}$\}. The group $\GA(n,K)$ is amenable by Lemma \ref{A(n,K)_amenable}, so $K^n$ cannot be $\GA(n,K)$-paradoxical.
\end{proof}

\section*{Acknowledgements}

The author would like to express his gratitude to Professor Wies{\l}aw \'Sliwa for many inspiring discussions and helpful suggestions.

\end{document}